\providecommand\@dotsep{5}\def\listtodoname{List of Todos}\def\listoftodos{\hypersetup{linkcolor=black}\@starttoc{tdo}\listtodoname\hypersetup{linkcolor=blue}}\makeatother
\definecolor{shaderulecolor}{rgb}{0.651,0.074,0.090}
\newtheorem{lemma}[theorem]{Lemma}
\newtheorem{proposition}[theorem]{Proposition}
\theoremstyle{definition}
\newtheorem{definition}[theorem]{Definition}
\theoremstyle{remark}
\newtheorem{remark}[theorem]{Remark}
\def\C{\mathbb C}
\def\R{\mathbb R}
\def\N{\mathbb N}
\def\O{\mathcal O}
\renewcommand{\leq}{\leqslant}
\renewcommand{\geq}{\geqslant}
\def\p{\partial}
\DeclareMathOperator{\supp}{supp}
\newcommand*\xbar[1]{%
   \hbox{%
     \vbox{%
       \hrule height 0.5pt 
       \kern0.5ex
       \hbox{%
         \ensuremath{#1}%
       }%
     }%
   }%
} 
\title[Fractional Calder\'{o}n problem on Riemannian manifolds]{Fractional Calder\'{o}n problem on a closed Riemannian manifold}
\author[Ali Feizmohammadi]{Ali Feizmohammadi}
\address{Fields institute, 222 College St, Toronto, ON M5T 3J1}
\email{afeizmoh@fields.utoronto.ca}
\begin{document}


\maketitle
\begin{abstract}
Given a fixed $\alpha \in (0,1)$, we study the inverse problem of recovering the isometry class of a smooth closed and connected Riemannian manifold $(M,g)$, given the knowledge of a source-to-solution map for the fractional Laplace equation $(-\Delta_g)^\alpha u=f$ on the manifold subject to an arbitrarily small observation region $\mathcal O$ where sources can be placed and solutions can be measured. This can be viewed as a non-local analogue of the well known anisotropic Calder\'{o}n problem that is concerned with the limiting case $\alpha=1$. While the latter problem is widely open in dimensions three and higher, we solve the non-local problem in broad geometric generality, assuming only a local property on the a priori known observation region $\mathcal O$ while making no geometric assumptions on the inaccessible region of the manifold, namely $M\setminus \mathcal O$. Our proof is based on discovering a hidden connection to a variant of Carlson's theorem in complex analysis that allows us to reduce the non-local inverse problem to the Gel'fand inverse spectral problem. 
\end{abstract}

\section{Introduction and the main result}
\subsection{Fractional Laplace equation} Let $(M,g)$ be a smooth closed and connected Riemannian manifold with dimension $n\geq 2$. Here, by closed we mean that the manifold is compact and without boundary. We denote by $-\Delta_g$ the (positive) Laplace--Beltrami operator on $(M,g)$ that is defined in local coordinates via the expression
\begin{equation}
\label{laplace}
-\Delta_gu=-\frac{1}{\sqrt{\det g}}\sum_{j,k=1}^n\frac{\p}{\p x^j}\left(\sqrt{\det g}\,g^{jk}\frac{\p u}{\p x^k}\right)\quad \forall\, u\in \mathcal C^{\infty}(M).
\end{equation}  
We write $(\cdot,\cdot)_{L^2(M)}$ for the standard Hermitian inner product on $L^2(M)$, write $0=\lambda_0<\lambda_1<\lambda_2<\ldots$ for the distinct eigenvalues of $-\Delta_g$ written in strictly increasing order. For each $k=0,1,2,\ldots$ we write $d_k$ to stand for the multiplicity of eigenvalue $\lambda_k$ and  let $\phi_{k,1},\ldots,\phi_{k,d_k}$ denote an $L^2(M)$-orthonormal basis for the eigenspace corresponding to $\lambda_k$. Finally, given any $k=0,1,\ldots$ we define $\pi_{k}:L^2(M)\to L^2(M)$ to be the projection operator on the eigenspace of $\lambda_k$ that is defined via
\begin{equation}
\label{proj_op}
\pi_k f = \sum_{\ell=1}^{d_k}(f,\phi_{k,\ell})_{L^2(M)}\,\phi_{k,\ell}.
\end{equation}
Given a fixed $\alpha\in (0,1)$ and any $u \in \mathcal C^{\infty}(M)$ the fractional Laplacian of $u$ with order $\alpha$, denoted by $(-\Delta_g)^{\alpha}u$, is typically defined on a closed manifold $(M,g)$ via the spectral expression 
\begin{equation}
\label{alpha_laplace}
(-\Delta_g)^{\alpha}u=\sum_{k=1}^{\infty}\lambda_k^{\alpha}\,\pi_ku,\quad \text{on $M$}.
\end{equation}
The fractional Laplace--Beltrami operator $(-\Delta_g)^{\alpha}$ can be extended by continuity as a bounded linear map from $H^s(M)$ to $H^{s-2\alpha}(M)$ for any $s\geq 0$.  We refer the reader to \cite[Theorem 1.2]{ACM} for an integral representation of the definition \eqref{alpha_laplace} stated on compact Riemannian manifolds (with or without boundary) that makes connections to alternative definitions of the fractional Laplace--Beltrami operator in the pseudo-differential framework, as well as providing applications in proving fractional Sobolev embedding theorems. 

In this paper, we consider the following elliptic equation,
\begin{equation}
\label{pf}
(-\Delta_g)^{\alpha}u=f \quad \text{on $M$}.
\end{equation}
Give any $s\geq 0$ and any $f \in H^s(M)$ with $(f,1)_{L^2(M)}=0$, equation \eqref{pf} admits a unique solution $u\in H^{s+2\alpha}(M)$ that is given by the expression
\begin{equation}
\label{spectral_sol} 
u=\sum_{k=1}^{\infty} \lambda_k^{-\alpha}\,\pi_kf\quad \text{on $M$}.
\end{equation}
We remark that the necessary orthogonality condition $(f,1)_{L^2(M)}=0$ above arises as $\lambda_0=0$ is an eigenvalue on a closed Riemannian manifold with the corresponding eigenspace of constant functions. 

\subsection{The inverse problem and our main result} To formulate our inverse problem associated to \eqref{pf} with a fixed $\alpha$, it will be convenient to first consider a smooth submanifold $(\O,g|_{\O})$ with codimension zero that has a smooth boundary, representing an {\em observable} region in the manifold that is assumed to be a priori known. This region can be arbitrarily small and it merely allows us to define a {\em source-to-solution} map corresponding to \eqref{pf} subject to placing sources $f$ that are supported in $\O$ and subsequently measuring the solutions in the same observable region $\O$. We will write $\O^{\textrm{int}}$ for the interior of the set $\O$ and $\p \O$ for its boundary, so that $\O=\O^{\textrm{int}}\cup \p\O$. Also, given an open set $U$, $\mathcal C^{\infty}_0(U)$ denotes the space of complex valued smooth functions with a compact support in $U$.

\begin{definition}[Source-to-Solution map]
	\label{so_to_sol_def}
Let $\alpha\in (0,1)$ and let $\O$ be a smooth submanifold of $(M,g)$ with codimension zero and a smooth boundary. Given any 
$$f \in \mathcal C^{\infty}_0(\O^{\textrm{{int}}})\quad \text{with $(f,1)_{L^2(\O)}=0$},$$
we define
\begin{equation}\label{so_to_sol}
\mathscr L^{\alpha,\O}_{M,g}f= u{|_{\O}},\end{equation}
where $u|_{\O}$ is the restriction to the set $\O$ of the unique solution to \eqref{pf} with source $f$. 
\end{definition}

Our inverse problem can now be stated as follows: Does the knowledge of the source-to-solution map, $\mathscr L^{\alpha,\O}_{M,g}$, corresponding to a fixed fractional order $\alpha$ and a fixed observation region $\O$ determine the ambient manifold $(M,g)$ globally? We note that there is a natural obstruction to uniqueness that comes from isometries on $M$ that fix the observable set $\O$. In the case of the anisotropic Calder\'{o}n problem, that concerns the limiting case $\alpha=1$, this obstruction was first noted by Luc Tartar \cite{KV}. In the non-local case we refer the reader to \cite[Theorem 4.2]{GhUh}. Therefore, for our inverse problem that we call the fractional Calder\'{o}n inverse problem, the best we can expect is to recover the manifold up to this obstruction.  Let us remark that for the anisotropic Calder\'{o}n inverse problem and specifically when the dimension of the manifold is two, there is an additional obstruction to uniqueness coming from conformal invariance of the Laplace--Beltrami operator in two dimensions \cite{LaUh,Na96}. However, this additional obstruction is not present in the fractional Calder\'{o}n inverse problem since equation \eqref{alpha_laplace} is not conformally invariant in any dimension $n\geq 2$.

Our main result states that it is possible to uniquely recover the isometry class of the manifold $(M,g)$ from the source-to-solution map, assuming only a local property on the observable set $\O$. In order to state our assumption, we need to recall the definition of Gevrey classes $\mathscr G^{N}$ with $N\geq 1$ which are intermediate functional spaces lying between the space of real-analytic functions and the space of smooth functions, see for example \cite{Gr,Rudino}. We also refer the reader to \cite{BT,Carleman} for connections between Gevrey classification and the Denjoy--Carleman classification of smooth functions.

\begin{definition}
	\label{C_N_def}
	Given an open set $U\subset \R^n$ and a constant $N\geq 1$, we say that a smooth function $f$ belongs to the Gevrey class $\mathscr G^{N}(U)$ if given any compact set $K\Subset U$, there is a constant $A>0$ depending on $K$ and $f$ such that
	$$ \|\partial^{\beta}f\|_{L^{\infty}(K)}\leq A^{|\beta|+1}\,(\beta!)^{N},$$
	for all multi-indexes $\beta\in \{0,1,2,\ldots\}^n$. 
\end{definition}
Note that $\mathscr G^{N_1}\subset \mathscr G^{N_2}$ whenever $N_1<N_2$. It is also clear from the latter definition that real-analytic functions are a member of all Gevrey classes. We are now ready to state the local assumption that we will impose on our observation set $\O$.

\begin{definition}
	\label{Gevrey_metric}
	Let $N\geq 1$ and let $\O$ be a smooth submanifold of $(M,g)$ with codimension zero and a smooth boundary. We say that $\O$ locally admits a metric in $\mathscr G^{N}$, if given any $p\in \O^{\textrm{{int}}}$ there is a coordinate chart $(U_p,\psi_p)$ belonging to its maximal smooth atlas with $p\in U_p\subset \O^{\textrm{{int}}}$ such that the components of the metric $g|_{U_p}$, when expressed in this coordinate chart, belong to the Gevrey class $\mathscr G^N(\psi_p(U_p))$. 
\end{definition}

Our main result can now be stated as follows.
\begin{theorem}
\label{main_thm}
Let $\alpha \in (0,1)$. For $j=1,2$, let $(M_j,g_j)$ be a smooth closed and connected Riemannian manifold with dimension $n\geq 2$ and let $\O_j\subset M_j$ be a smooth submanifold of codimension zero with a smooth boundary. Assume that $(\O_j,g_{j}|_{\O_j})$, $j=1,2$, locally admits a metric in $\mathscr G^{N}$ for some $N\in [1,2),$
in the sense of Definition~\ref{Gevrey_metric} and that  
\begin{equation}
\label{O_eq}
(\O_1,g_1|_{\O_1})=(\O_2,g_2|_{\O_2}).
\end{equation}
Suppose that
\begin{equation}
\label{so_to_sol_eq}
\mathscr L^{\alpha,\O_1}_{M_1,g_1} f=\mathscr L^{\alpha,\O_2}_{M_2,g_2} f\quad \text{for all $f \in \mathcal C^{\infty}_0(\O_1^{\text{{\em int}}})$ with $(f,1)_{L^2(\O_1)}=0$.}
\end{equation}
Then there exists a diffeomorphism $\Phi:M_1\to M_2$ that fixes the set $\O_1$, such that
$$\Phi^*g_2=g_1\quad \text{on $M_1$}.$$
\end{theorem}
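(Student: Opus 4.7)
My plan proceeds in four steps. \textbf{Step 1 (spectral reformulation).} First, I would pair the identity \eqref{so_to_sol_eq} with a test function $h\in \mathcal C^{\infty}_0(\O_1^{\text{int}})$ satisfying the same orthogonality as $f$ and use the spectral expression \eqref{spectral_sol}. Writing $\{\lambda_k^{(j)}\}_{k\geq 1}$ for the distinct positive eigenvalues of $-\Delta_{g_j}$ with orthonormal eigenbasis $\{\phi_{k,\ell}^{(j)}\}_{\ell=1}^{d_k^{(j)}}$, and setting
\[ S_k^{(j)}(f,h):=\sum_{\ell=1}^{d_k^{(j)}}(f,\phi_{k,\ell}^{(j)})_{L^2(\O)}\,(h,\phi_{k,\ell}^{(j)})_{L^2(\O)},\]
the hypothesis reads
\[ \sum_{k\geq 1}(\lambda_k^{(1)})^{-\alpha}S_k^{(1)}(f,h)=\sum_{k\geq 1}(\lambda_k^{(2)})^{-\alpha}S_k^{(2)}(f,h)\]
for every admissible $(f,h)$, where I identify $\O_1=\O_2=\O$ via \eqref{O_eq}.

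\textbf{Step 2 (upgrade via the local Laplacian).} The crucial observation is that because the metrics agree on $\O$, the Laplace--Beltrami operator on $\O^{\text{int}}$ is unambiguously defined and coincides in $\O^{\text{int}}$ with each $\Delta_{g_j}$ applied to a smooth function. Applying $(-\Delta_{g|_\O})^m$ to the equation $\mathscr L^{\alpha,\O_1}_{M_1,g_1}f=\mathscr L^{\alpha,\O_2}_{M_2,g_2}f$ inside $\O^{\text{int}}$ and then pairing against $h$ yields, for every $m\in\N_{\geq 0}$,
\[ F_1(\alpha-m)=F_2(\alpha-m),\qquad F_j(z):=\sum_{k\geq 1}(\lambda_k^{(j)})^{-z}\,S_k^{(j)}(f,h).\]
Thus a \emph{single} datum at $z=\alpha$ has been promoted to countably many identities on an arithmetic progression receding to $-\infty$.

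\textbf{Step 3 (Carlson-type uniqueness).} Smoothness of $f,h$ already makes $F_j$ entire. Choosing $f,h$ in a Gevrey class compactly supported in $\O^{\text{int}}$, the assumption that $\O$ admits a metric in $\mathscr G^N$ in the sense of Definition~\ref{Gevrey_metric} yields, via standard Gevrey eigenfunction estimates, coefficient decay of the shape $|S_k^{(j)}(f,h)|\leq C\exp(-c(\lambda_k^{(j)})^{1/(2N)})$. A saddle-point estimate then bounds $|F_j(z)|$ on left half-planes by $\exp(C|z|\log|z|)$. A variant of Carlson's theorem for entire functions in this precise growth class---this is the complex-analytic ingredient advertised in the abstract---will force any such function vanishing on $\{\alpha-m:m\in\N_{\geq 0}\}$ to vanish identically. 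The threshold $N<2$ enters here as the borderline regularity compatible with this uniqueness principle. Hence $F_1\equiv F_2$ as entire functions, and letting $(f,h)$ vary over $\mathcal C^{\infty}_0(\O^{\text{int}})^2$ together with the standard uniqueness of generalized Dirichlet series separates the frequencies $\lambda_k^{(j)}$ and identifies $\{\lambda_k^{(1)}\}=\{\lambda_k^{(2)}\}$ and $S_k^{(1)}=S_k^{(2)}$ for every $k$.

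\textbf{Step 4 (spectral data and Gel'fand).} Equality of $S_k^{(1)}$ with $S_k^{(2)}$ on $\mathcal C^{\infty}_0(\O^{\text{int}})^2$ for every $k$ is the statement that the spectral projectors $\pi_k^{(j)}$ agree as operators on $L^2(\O)$; together with the coincidence of the eigenvalues this is partial Gel'fand spectral data on the observation region. I would then appeal to the Gel'fand inverse spectral theorem in the Belishev--Kurylev boundary-control formulation, with $\O$ playing the role of a codimension-zero ``observation collar,'' to produce the desired diffeomorphism $\Phi:M_1\to M_2$ fixing $\O$ with $\Phi^*g_2=g_1$. The main obstacle is clearly Step 3: pinning down the exact growth class of $F_j$ from the Gevrey hypothesis and proving the corresponding Carlson-type uniqueness at the integer-shifted grid $\{\alpha-m\}$. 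The restriction $N\in[1,2)$ in the theorem is dictated entirely by this analytic step; once it is secured, Step 4 is a black-box application of known inverse-spectral machinery.
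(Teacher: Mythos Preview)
Your overall strategy coincides with the paper's: build an entire function from the spectral expansion, show it vanishes on a real sequence using the data together with the locality of $\Delta_g$ on $\O$, invoke a Carlson-type theorem, extract the spectral data, and finish with Belishev--Kurylev. The genuine gap is in Steps~2--3, and it is precisely where the threshold $N<2$ is decided.

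You feed the Carlson machine only the single arithmetic progression $\{\alpha-m:m\geq 0\}$, of density~$1$. But the growth you (correctly) obtain from the Gevrey hypothesis is of order $e^{2N'|x|\log|x|}$ along the real axis with $N'>N\geq 1$. In Pila's variant one absorbs this growth by dividing by a power $\Gamma(z+1)^{2\tau'}$ with $\tau'\in(N',2)$; the price is exponential growth of indicator $\pi\tau'$ on the imaginary axis, and the classical Carlson step then requires the zero set to have density exceeding $\tau'>1$. A density-$1$ sequence cannot close the argument: for instance $h(z)=\sin(\pi z)\,\Gamma(z+1)^{2\tau'}$ with any $\tau'>1$ is nonzero, bounded on the imaginary axis, has real-axis growth $e^{2\tau' x\log x}$, and vanishes at every nonnegative integer. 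What the paper does---and what you are missing---is to use a \emph{second} progression that comes for free: since $(-\Delta_g)^m f$ is the same function on $\O$ for both manifolds (a purely local identity, independent of the source-to-solution hypothesis), one also has $F_1(-m)=F_2(-m)$ for every $m\in\N$. The union $\{-m\}_{m\geq 1}\cup\{\alpha-m\}_{m\geq 0}$ has density~$2$, and this doubling is exactly what converts the Carlson threshold from $\tau'<1$ (impossible) into $\tau'<2$, i.e.\ $N<2$. Once you include the integer grid your outline becomes essentially the paper's proof.
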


\begin{remark}
	Note that the assumption that the observation region is locally in some Gevrey class $\mathscr G^N$ is naturally a weaker assumption than real analyticity, so as a corollary let us mention that if the observation region is assumed to be real-analytic, the above theorem gives a uniqueness result for recovery of the isometry class of $(M,g)$. Let us also remark that the equality of the observation regions \eqref{O_eq} can be relaxed; it suffices that they are isometric to each other.
\end{remark}

The main novelty of Theorem~\ref{main_thm} is that we recover the ambient Riemannian geometry for the equation \eqref{pf}. While we assume that the observation region $\O$ locally admits a metric in some Gevrey class, we make no geometric assumptions on the inaccessible region of the manifold, namely $M\setminus \O$. We achieve this by discovering a connection between the fractional Calder\'{o}n problem and a variant of Carlson's classic analytic continuation theorem in the complex plane that is due to Pila \cite{Pila}, see Appendix~\ref{sec_appendix} for the details. This allows us to fully recover the spectral data associated to the Dirichlet Laplacian of the inaccessible manifold and reduce our problem to the well known Gel'fand inverse spectral problem. The result then follows from the work of Kurylev and Belishev in \cite{BK92} that uses the Boundary Control method to solve the latter problem in full geometric generality. 

\subsection{Motivation}

Let us first motivate our inverse problem by remarking that the study of fractional orders of elliptic operators is a topic with many physical applications. Indeed, fractional orders of elliptic operators appear in the modeling of non-local diffusion, permeable medium, and fluid turbulence in physics \cite{GL97,Las00,ZD10}, as well as modeling of L\'{e}vy processes, population dynamics and jump processes in stock markets \cite{AB88,Lev}. We refer the reader to the survey article \cite{BV16} for more applications. 

Aside from physical motivations, let us also mathematically motivate our inverse problem associated to the non-local equation \eqref{pf} by making a few remarks about the limiting case $\alpha=1$ (which is outside the scope of this paper). When $\alpha=1$, our inverse problem can be seen as an equivalent formulation of the well known anisotropic Calder\'{o}n problem, usually stated for manifolds with boundary and in terms of the Dirichlet-to-Neumann map as opposed to source-to-solution maps, see for example \cite{DKSU}. The anisotropic Calder\'{o}n problem has been solved in dimension two \cite{ALP05,LU,Na96} but remains a major open problem in higher dimensions. The works \cite{LLS,LTU,LaUh,LU} study the anisotropic Calder\'{o}n problem on real-analytic manifolds with a real-analytic metric and an analogous result is proved in \cite{GS} for Einstein manifolds (which are real-analytic in their interior). Outside the real-analytic category results are rather sparse. The seminal work \cite{SU} provides a uniqueness result for conformally Euclidean metrics on a bounded domain using the idea of Complex Geometric Optics solutions. The works \cite{DKSU,DKLS} generalize this idea further and show uniqueness for the metric holds assuming that (i) the conformal class of the manifold is a priori known and (ii) the manifold has, within its conformal class, a global unit parallel vector field. We refer the reader to \cite{Gu} and the references therein for a survey of this problem.
\subsection{Previous literature}
The study of inverse problems associated to fractional orders $\alpha<1$ of the Laplace operator was initiated in \cite{GSU}. There, the authors solved the inverse problem of recovering a zeroth order coefficient $q$ in the equation
\begin{equation}
\label{schrodinger}
(-\Delta)^\alpha u +q\,u=0 \quad \text{on $\Omega$},\end{equation}
with $u|_{\R^n\setminus\Omega}=f$ given the knowledge of an associated source-to-solution map. We recall that the fractional Laplace operator of order $\alpha$ on the Euclidean space $\R^n$ is defined by $$(-\Delta)^\alpha u=\mathscr F^{-1}(|\xi|^{2\alpha}\hat{u}(\xi)),$$
where $\mathscr F u =\hat{u}$ stands for the Fourier transform of $u$. Their proof relied on a strong unique continuation result for the fractional Laplace operator together with a density argument for products of solutions to equation \eqref{schrodinger} that is based on a Runge type approximation argument, see for example \cite{DSV}. Since then, the area of recovering lower order coefficients (or lower order perturbations) in the context of fractional elliptic equations analogous to \eqref{schrodinger} has been a very active area of research. We mention for example the following results: uniqueness and reconstruction methods subject to a single measurement \cite{GRSU}, recovery of zeroth order coefficients in the presence of a priori known anisotropic leading order coefficients \cite{GLX}, study of uniqueness and (in-)stablity in the low regularity regime \cite{RS18,RS20}, study of inversion methods based on monotonicity assumptions \cite{HL19,HL20}, study of  uniqueness for magnetic problems and their lower perturbations \cite{BGU,CLR,Cov20a} and study of uniqueness in the presence of nonlinearity \cite{LL20,LO20}.  We refer the reader to the works \cite{Ru18,Salo} for review. 

Let us mention that all of the preceding works are concerned with the recovery of lower order coefficients (or lower order perturbations) in non-local equations and assume that the leading order coefficients are known. In the recent work \cite{GhUh}, the authors consider fractional powers of general self-adjoint positive operators on a domain $\Omega\subset \R^n$ with unknown anisotropic leading order coefficients in $\Omega$. They prove that the exterior data for the associated non-local operators determines the Cauchy data set on the boundary $\p \Omega$ for the corresponding local operator and as a corollary, they prove several new results for non-local inverse problems by using the corresponding resuts for the local inverse problems. Examples include recovery of a general Riemannian metric up to the natural gauge in dimension two and recovery of Riemannian metrics on $\Omega\subset \R^n$, $n\geq 2$ that are conformally Euclidean as well as recovery of real-analytic metrics up to the natural gauge.

\subsection*{Acknowledgments} The author was supported by the Fields institute for research in mathematical sciences. The author gratefully acknowledges the talk of Gunther Uhlmann that presented the results of \cite{GhUh} in the inverse problems and nonlinearity conference in August 2021 (and where an analogous inverse problem was posed on a Riemannian manifold with boundary), as a main source of motivation for this work. 

\section{Analytic interpolation in the complex plane}
\label{sec_carlson}
Our main aim in this section is to prove the following proposition.
\begin{proposition}
\label{main_prop}
Let the hypotheses of Theorem~\ref{main_thm} be satisfied. For $j=1,2$ let $\{\lambda_k^{(j)}\}_{k=0}^{\infty}$ be the set of distinct eigenvalues of $-\Delta_{g_j}$ on $(M_j,g_j)$ written in strictly increasing order and let $\pi_k^{(j)}$ be the projection operator on the eigenspace of $\lambda^{(j)}_k$ associated to the operator $-\Delta_{g_j}$ on $(M_j,g_j)$. Then, there holds
$$ \lambda_k^{(1)}=\lambda_k^{(2)} \quad \text{and}\quad (\pi_k^{(1)}f)|_{\O_1}=(\pi^{(2)}_kf)|_{\O_1},\quad \text{for all $k=0,1,2,\ldots$}$$
and all $f \in \mathcal C^{\infty}_0(\O_1^{\textrm{{\em int}}})$. Here, $|_{\O_1}$ is the restriction to the set $\O_1=\O_2$.
\end{proposition}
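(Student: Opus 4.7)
The plan is to exploit the locality of $\Delta_g$ over the a priori known region $\O_1$ to upgrade the single-exponent hypothesis into data at an entire arithmetic progression of complex exponents, and then use a Carlson-type interpolation argument to recover the full spectral information. Write $u_j=(-\Delta_{g_j})^{-\alpha}f$ for $f\in\mathcal{C}^{\infty}_0(\O_1^{\text{int}})$ with $(f,1)_{L^2(\O_1)}=0$; by \eqref{so_to_sol_eq} and \eqref{O_eq} we have $u_1|_{\O_1}=u_2|_{\O_1}$. Since $g_1|_{\O_1}=g_2|_{\O_2}$, the operators $\Delta_{g_1}$ and $\Delta_{g_2}$ coincide as local differential operators on $\O_1^{\text{int}}$, so iterating them on the common restriction yields
\begin{equation*}
(-\Delta_{g_1})^{m-\alpha}f\big|_{\O_1}=(-\Delta_{g_2})^{m-\alpha}f\big|_{\O_1}\quad\text{for all }m=0,1,2,\ldots,
\end{equation*}
which by \eqref{spectral_sol} is the identity $\sum_k(\lambda_k^{(1)})^{m-\alpha}(\pi_k^{(1)}f)|_{\O_1}=\sum_k(\lambda_k^{(2)})^{m-\alpha}(\pi_k^{(2)}f)|_{\O_1}$.

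Next I would reinterpret this as two entire Dirichlet-type functions agreeing on an arithmetic progression. Fix $x\in\O_1^{\text{int}}$, and pick $f\in\mathcal{C}^{\infty}_0(\O_1^{\text{int}})$ with $(f,1)_{L^2(\O_1)}=0$ that additionally lies in some Gevrey class $\mathscr{G}^{N_0}$ with $N_0\in(1,2)$; such test functions are built from standard Gevrey bump functions. Because $f$ is supported in $\O_1^{\text{int}}$ where the Gevrey-$N$ hypothesis on the metric is available, the iterates $(-\Delta_{g_j})^s f$ admit Gevrey estimates, and integration by parts against eigenfunctions followed by optimization in $s$ yields subexponential spectral decay of the form $|(f,\phi_{k,\ell}^{(j)})_{L^2(M_j)}|\leq C\exp(-c\lambda_k^{1/(2\max\{N,N_0\})})$. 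Together with standard $L^\infty$ and Weyl bounds on eigenfunctions, this makes
\begin{equation*}
F_j(z):=\sum_{k=1}^{\infty}\bigl(\lambda_k^{(j)}\bigr)^{-z}(\pi_k^{(j)}f)(x),\qquad j=1,2,
\end{equation*}
entire in $z$ and uniformly bounded on every vertical line. The preceding paragraph then asserts $F_1(\alpha-m)=F_2(\alpha-m)$ for all $m\geq 0$.

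At this point I would invoke the Pila-type variant of Carlson's theorem developed in Appendix~\ref{sec_appendix} to conclude $F_1\equiv F_2$ on $\C$. Uniqueness of generalized Dirichlet series (peel off the term with the smallest $\lambda_k$ as $\Re z\to+\infty$ and iterate) then forces, for each $\lambda$ with either $(\pi_\lambda^{(1)}f)(x)\neq 0$ or $(\pi_\lambda^{(2)}f)(x)\neq 0$, that $\lambda$ is an eigenvalue of both operators with the same restricted projection value. To rule out ``invisible'' eigenvalues I use that a Gevrey-$N$ metric with $N<2$ enjoys unique continuation for solutions of elliptic equations, so no non-trivial eigenfunction can vanish identically on $\O_1$; varying $f$ and $x$ therefore produces non-vanishing coefficients for every eigenvalue of $-\Delta_{g_1}$ and of $-\Delta_{g_2}$. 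Combining yields $\lambda_k^{(1)}=\lambda_k^{(2)}$ and $(\pi_k^{(1)}f)|_{\O_1}=(\pi_k^{(2)}f)|_{\O_1}$ for every $k\geq 1$ and every admissible Gevrey $f$; density of Gevrey mean-zero test functions in $\mathcal{C}^{\infty}_0(\O_1^{\text{int}})$ in $L^2(\O_1)$ extends the projection identity to all smooth mean-zero $f$, and the $k=0$ case follows from $L^2$ completeness via $\pi_0 f=f-\sum_{k\geq 1}\pi_k f$.

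The main obstacle is the Carlson--Pila interpolation step. A direct calculation shows that for Gevrey data, $F_j$ can grow as fast as $\exp(C|z|\log|z|)$ along the negative real axis, strictly beyond ordinary exponential type, so the textbook Carlson theorem does not apply off the shelf. The point of Pila's refinement is to absorb precisely this slightly superexponential horizontal growth at the cost of demanding uniform control on vertical lines, and it is the Gevrey threshold below $2$ that places the resulting growth envelopes on the correct side of admissibility. Setting up the quantitative bounds on $\|(-\Delta_{g_j})^s f\|_{L^2(M_j)}$ purely from the local Gevrey hypothesis on $g_j|_{\O_j}$, and verifying they fit Pila's hypotheses, is the technical heart of the argument and the reason the Gevrey assumption cannot be relaxed past $N=2$.
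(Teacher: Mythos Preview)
Your overall architecture tracks the paper's closely --- define an entire spectral function, use Gevrey data to control its growth, apply the Pila--Carlson variant from Appendix~\ref{sec_appendix}, then read off eigenvalues and projections --- but there is a genuine gap in the interpolation step.

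You only record the equalities $F_1(\alpha-m)=F_2(\alpha-m)$, i.e.\ you use the single arithmetic progression $\{m-\alpha\}_{m\geq 0}$ of density~$1$. The paper's Lemma~\ref{lem_eq_seq} uses \emph{two} interleaved progressions: besides the points $m-\alpha$ coming from the source-to-solution map, it also uses the integers $m$, where the identity $\zeta_f^{(1)}(m,x)=\zeta_f^{(2)}(m,x)$ follows trivially from $\Delta_{g_1}=\Delta_{g_2}$ on $\O$ (this is $(-\Delta_g)^m f$ restricted to $\O$). Together these form the sequence $(b_k)$ in \eqref{b_sequence} of density~$2$. This doubling is not cosmetic: your own Gevrey estimate, and the paper's Lemma~\ref{technical_lemma}, give growth $|h(x)|\leq Ce^{2N'' x\log x}$ with $N''>\max\{N,N_0\}>1$. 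In Pila's theorem (Proposition~\ref{prop_carlson}) the normalization $h(z)/\Gamma(z+1)^{2\tau'}$ then has exponential type $\pi\tau''$ on the imaginary axis with $\tau''>1$, and the final Carlson/Rubel step requires the density of the vanishing set to exceed $\tau''$. A density~$1$ progression cannot beat $\tau''>1$, so the argument collapses precisely at the step where you ``invoke the Pila-type variant.'' Since compactly supported Gevrey test functions exist only for exponent strictly larger than~$1$, you cannot push $N_0$ down to~$1$ to rescue this; the integer progression is what buys the extra unit of density.

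A couple of smaller points once that is fixed. Your extraction of spectral data via Dirichlet-series asymptotics (peel off the smallest frequency) is a legitimate alternative to the paper's resolvent-pole argument in Lemma~\ref{zeta_smooth_equal}. Your $k=0$ step, writing $\pi_0 f=f-\sum_{k\geq 1}\pi_k f$ and restricting to $\O$, is actually cleaner than the paper's route through Weyl's law. Finally, unique continuation for second-order elliptic equations with smooth coefficients holds without any Gevrey hypothesis, so you need not invoke the Gevrey assumption there.
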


Before proving Proposition~\ref{main_prop} we need to fix a few notations. In view of the equality \eqref{O_eq} we use the notation 
\begin{equation}
\label{O_g_equal}
\O:=\O_1=\O_2\quad \text{and}\quad g:=g_1|_{\O}=g_2|_{\O}.
\end{equation}
For $j=1,2$, and given any $f\in \mathcal C^{\infty}_0(\O^{\textrm{int}})$ we (for now, formally) define the function 
$$\zeta_f^{(j)}: \C\times \O \to \C,$$
by the expression
\begin{equation}
\label{zeta_def}
\zeta^{(j)}_f(z,x)= \sum_{k=1}^{\infty} (\lambda_k^{(j)})^{z}\,(\pi_k^{(j)}f)(x),\quad \forall\, (z,x) \in \C \times \O,
\end{equation}
where $(\lambda_k^{(j)})^{z}=e^{z\log(\lambda_k^{(j)})}$ for any $k\in \N$ and we recall that
\begin{equation}
\label{proj_j_def}
(\pi_k^{(j)} f)(x) = \sum_{\ell=1}^{d^{(j)}_k}(f,\phi^{(j)}_{k,\ell})_{L^2(M_j)}\,\phi^{(j)}_{k,\ell}(x),\quad \forall\, x\in \O,
\end{equation}
with $\{\phi^{(j)}_{k,\ell}\}_{\ell=1}^{d_k^{(j)}}$ denoting an $L^2(M_j)$-orthonormal basis for the eigenspace corresponding to $\lambda_k^{(j)}$. The following lemma justifies the definition \eqref{zeta_def} showing not only that the infinite sum in \eqref{zeta_def} is absolutely convergent but that it is also an entire holomorphic function.

\begin{lemma}
	\label{lem_zeta_hol}
	For $j=1,2$, given any fixed $f\in \mathcal C^{\infty}_0(\O^{\textrm{{\em int}}})$ and any $x \in \O$, the function $$\zeta_f^{(j)}(\cdot,x):\C\to \C,$$ 
	is an entire holomorphic function.
\end{lemma}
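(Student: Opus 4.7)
The plan is to prove absolute and uniform convergence of the series defining $\zeta_f^{(j)}(\cdot,x)$ in \eqref{zeta_def} on every compact subset of $\mathbb{C}$; the lemma will then follow from Weierstrass's theorem on locally uniform limits of holomorphic functions, since each summand $z \mapsto (\lambda_k^{(j)})^z (\pi_k^{(j)} f)(x)$ is manifestly entire, being a constant multiple of $e^{z\log \lambda_k^{(j)}}$.

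The crucial input is the smoothness of $f$. Iterating the self-adjoint eigenrelation $(-\Delta_{g_j})^N \phi_{k,\ell}^{(j)} = (\lambda_k^{(j)})^N \phi_{k,\ell}^{(j)}$ and applying Cauchy--Schwarz yields, for every $N \in \mathbb{N}$ and every $k \geq 1$,
$$
|(f,\phi_{k,\ell}^{(j)})_{L^2(M_j)}| \;\leq\; (\lambda_k^{(j)})^{-N}\,\|(-\Delta_{g_j})^{N} f\|_{L^2(M_j)},
$$
so the Fourier coefficients of $f$ decay faster than any polynomial in $\lambda_k^{(j)}$. To offset this against the spectral factor $(\lambda_k^{(j)})^z$ and against the pointwise size of the eigenbasis, I would invoke three standard polynomial bounds: Sobolev embedding combined with elliptic regularity for $-\Delta_{g_j}$ gives $\|\phi_{k,\ell}^{(j)}\|_{L^\infty(M_j)} \leq C (1+\lambda_k^{(j)})^{s}$ for some $s=s(n)$, Weyl's law gives $d_k^{(j)} \leq C (\lambda_k^{(j)})^{n/2}$, and the same law gives the lower bound $\lambda_k^{(j)} \geq c\,k^{2/n}$ for all large $k$.

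Combining these with the super-polynomial decay of the Fourier coefficients, for any compact set $K \subset \mathbb{C}$ and any $z \in K$ one obtains a pointwise bound on the $k$-th term of the form $C_{f,K,N}\,(\lambda_k^{(j)})^{A - N}$, where the exponent $A$ depends only on $K$, $n$, and the uniform bound on $\mathrm{Re}(z)$ over $K$. Choosing $N$ strictly larger than $\sup_{z \in K}\mathrm{Re}(z) + s + n/2 + 1$ makes this a summable majorant in $k$ (via Weyl's lower bound on $\lambda_k^{(j)}$), which yields absolute and uniform convergence on $K$ and completes the proof.

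I do not anticipate any substantive obstacle here: the argument is a routine pairing of smoothness of $f$ against polynomial spectral growth. The only mild subtlety is making sure that $N$ is chosen after fixing the compact set $K$ in the $z$-variable, so that the exponent in the combined estimate is strictly negative enough for Weyl's law to deliver summability.
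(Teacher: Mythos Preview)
Your proposal is correct and follows essentially the same approach as the paper: rapid decay of Fourier coefficients from smoothness of $f$ (via iterated integration by parts against the eigenfunction equation), polynomial $L^\infty$ bounds on eigenfunctions, Weyl's law for multiplicities and eigenvalue growth, and Weierstrass's theorem for the conclusion. The only cosmetic difference is that the paper invokes H\"{o}rmander's sharp bound $\|\phi_{k,\ell}^{(j)}\|_{L^\infty}\leq C(\lambda_k^{(j)})^{(n-1)/4}$ in place of your Sobolev embedding argument and packages the choice of $N$ as an explicit function $\kappa(z)=\lceil \mathrm{Re}(z)\rceil+n+1$, but neither change is material for this lemma.
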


\begin{proof}
	Given $j=1,2$ and any $\mu\in \N$, let us define the partial sums
	\begin{equation}
	\label{S_mu}
	S_{\mu}^{(j)}(z,x)=\sum_{k=1}^\mu(\lambda_k^{(j)})^{z}(\pi_k^{(j)}f)(x),\quad z \in \C,\quad x\in \O,\end{equation}
	and
	\begin{equation}
	\label{S_mu_abs}
	\widetilde{S}_{\mu}^{(j)}(z,x)=\sum_{k=1}^\mu|(\lambda_k^{(j)})^{z}|\,|(\pi_k^{(j)}f)(x)|,\quad z \in \C,\quad x\in \O,\end{equation}
	where $|\cdot|$ denotes the absolute value function on complex numbers. Recall that for $j=1,2$ and given any $\ell=1,\ldots,d^{(j)}_k$, we have 
	$$-\Delta_{g_j}\phi^{(j)}_{k,\ell}=\lambda_k^{(j)}\phi^{(j)}_{k,\ell}\quad \text{on $M_j$,}\qquad j=1,2.$$
	Using the latter identity together with the fact that $f$ is compactly supported in $\O^{\textrm{int}}$, we can repeatedly apply Green's integral identity to deduce that
	$$(f,\phi^{(j)}_{k,\ell})_{L^2(\O)}=\int_{\O}f\,\phi^{(j)}_{k,\ell}\,dV_{g}=(-1)^{m}(\lambda_k^{(j)})^{-m}\int_{\O}(\Delta^{(m)}_{g}f)\,\phi^{(j)}_{k,\ell}\,dV_{g},$$
	for any $m\in \N$, where 
	\begin{equation}
	\label{Delta_m}
	\Delta_{g}^{(m)}f= \underbrace{\Delta_{g}\circ \ldots\circ \Delta_{g}}_{\text{$m$ times}}f,\end{equation}
	and we recall that $g$ is as defined by \eqref{O_g_equal}.  Applying the Cauchy--Schwarz inequality and recalling the definition of the projection operator it follows that given $j=1,2$, any $k\in \N$ and $m\in \N$, there holds
	\begin{equation}
	\label{proj_est_1} \|\pi_k^{(j)}f\|_{L^{\infty}(\O)}\leq (\lambda_k^{(j)})^{-m}\|\Delta_{g}^{(m)}f\|_{L^{2}(\O)}\left(\sum_{\ell=1}^{d^{(j)}_k}\|\phi_{k,\ell}^{(j)}\|_{L^{\infty}(\O)} \right).
\end{equation}
	Next let us define
	$$  
	\kappa(z)= \begin{cases}
	\lceil \textrm{Re}(z)\rceil +n+1 \quad & \text{if $\textrm{Re}(z)\geq 0$},\\
	n+1 \quad & \text{if $\textrm{Re}(z)<0$,}
	\end{cases}
	$$
	where $\textrm{Re}(z)$ denotes the real part of $z\in \C$ and $\lceil\cdot\rceil$ is the ceiling function that gives the smallest integer not less than its argument. Applying the inequality \eqref{proj_est_1} with the choice $m=\kappa(z)$, it follows that given any $x\in \O$
	\begin{equation}
	\label{S_mu_bound}
	\widetilde{S}^{(j)}_\mu(z,x) \leq C_{1,j}\|\Delta_{g}^{(\kappa(z))}f\|_{L^\infty(\O)}\sum_{k=1}^{\mu}\frac{\sum_{\ell=1}^{d^{(j)}_k}\|\phi_{k,\ell}^{(j)}\|_{L^{\infty}(\O)}}{\lambda_k^{n+1}},\end{equation}
	for some constant $C_{1,j}>0$ that only depends on $j=1,2$. Next, we recall the following uniform bound for eigenfunctions in a closed Riemannian manifold (see \cite{Hor}),
	\begin{equation}
	\label{uni_eigen} \|\phi^{(j)}_{k,\ell}\|_{L^{\infty}(M_j)}\leq C_{2,j}\,(\lambda_k^{(j)})^{\frac{n-1}{4}}\quad \text{for $\ell=1,\ldots,d_k$},
	\end{equation}
	for some constant $C_{2,j}>0$ only depending on $j=1,2$. Recall also from the classical Weyl's law on smooth closed and connected Riemannian manifolds (see \cite{Ivrii} for example) that given any $\lambda>0$, there holds
	\begin{equation}
	\label{weyl} 
	\mathcal N_j(\lambda)\leq C_{3,j}\,\lambda^{\frac{n}{2}},
	\end{equation}
	for some constant $C_{3,j}>0$ only depending on $j=1,2$, where $\mathcal N_j(\lambda)$ is the number of eigenvalues of $-\Delta_{g_j}$ on $(M_j,g_j)$ that are less than $\lambda$ (counting with multiplicity). Combining the estimates \eqref{uni_eigen}--\eqref{weyl} we conclude that there exists $C_{4,j}>0$ only depending on $j=1,2$, such that
	\begin{equation}
	\label{eigen_inf_sum}
	\sum_{k=1}^{\mu}\frac{\sum_{\ell=1}^{d^{(j)}_k}\|\phi_{k,\ell}^{(j)}\|_{L^{\infty}(\O)}}{\lambda_k^{n+1}}\leq C_{4,j} \sum_{k=1}^{\infty} \frac{1}{k^{\frac{n}{4}+1}}=:C_{5,j}<\infty.
	\end{equation}
	Thus, given any $x\in \O$ and $z\in \C$, there holds: 
	\begin{equation}
	\label{S_bound}
	\widetilde{S}_\mu^{(j)}(z,x)\leq C_j\,\|\Delta_{g}^{(\kappa(z))}f\|_{L^\infty(\O)},
	\end{equation}
	for some constant $C_j>0$ only depending on $j=1,2$. Recalling the definitions \eqref{S_mu}--\eqref{S_mu_abs}, we conclude from the latter estimate that given any $x \in \O$, the function $S_\mu^{(j)}(z,x)$ converges uniformly on compact subsets of $\C$ and 		hence
	\begin{equation}
	\label{S_limit} \zeta_f^{(j)}(z,x)=\lim_{\mu\to\infty}S_\mu^{(j)}(z,x),\end{equation}
	is an entire holomorphic function on $\C$.
	\end{proof}

\begin{remark}
	The above proof in fact shows that given any $f\in \mathcal C^{\infty}_0(\O^{\textrm{int}})$, the mapping 
	$$ z \mapsto \sum_{k=1}^{\infty} (\lambda_k^{(j)})^{z}\,(\pi_k^{(j)}f)|_{\O},$$
	is an entire holomorphic function in the sense of taking values in $\mathcal C^{\infty}(\O)$.
	\end{remark}

\begin{lemma}
	\label{lem_eq_seq}
	For a fixed $\alpha \in (0,1)$, let the strictly monotone sequence $(b_k)_{k=1}^{\infty}$ be defined via
		\begin{equation}
	\label{b_sequence}
	\left\{
	\begin{array}{ll}
	b_{2m}=m & \forall \, m\in \N 
	\\
	b_{2m-1}=m-\alpha, & \forall \, m\in \N 
	\end{array}
	\right.
	\end{equation}
	Given any $f\in \mathcal C^{\infty}_0(\O^{\textrm{{\em int}}})$ there holds:
	\begin{equation}
	\label{zeta_b_equal} \zeta_f^{(1)}(b_k,x)=\zeta_f^{(2)}(b_k,x)\quad \forall\, k\in \N\quad \forall\,x\in \O.
	\end{equation}
\end{lemma}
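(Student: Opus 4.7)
The plan is to verify the claimed equality separately on the two interleaved subsequences $b_{2m}=m$ and $b_{2m-1}=m-\alpha$, $m\in\N$. The even-indexed (integer) case will follow from locality of the Laplacian on $\O$, while the odd-indexed case will follow from the source-to-solution hypothesis \eqref{so_to_sol_eq} after first applying an integer power of the Laplacian to $f$.

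For $z=m\in\N$ (the case $k=2m$), I would use the spectral identity $(-\Delta_{g_j})^{m}f=\sum_{k\geq 0}(\lambda_k^{(j)})^{m}\pi_k^{(j)}f$ together with the observation that the $k=0$ term vanishes since $\lambda_0^{(j)}=0$ and $m\geq 1$. Thus $\zeta_f^{(j)}(m,x)=(-\Delta_{g_j})^{m}f(x)$. Because $f$ is compactly supported in $\O^{\textrm{int}}$, iterated Laplacians of $f$ remain supported in $\O^{\textrm{int}}$, and because $g_1=g_2=g$ on $\O$ by \eqref{O_g_equal}, this quantity at any $x\in\O$ is computed entirely from $g$ and is therefore independent of $j$.

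For $z=m-\alpha$ (the case $k=2m-1$), the key move is to set $h:=(-\Delta_g)^{m}f$. Then $h\in\mathcal C^{\infty}_0(\O^{\textrm{int}})$, and integration by parts (with no boundary contribution, since $f$ is compactly supported in $\O^{\textrm{int}}$) gives $(h,1)_{L^2(\O)}=0$. Thus $h$ is an admissible input to both source-to-solution maps. Using Green's identity on the closed manifolds $M_j$ to transfer the Laplacian from $f$ onto the eigenfunctions $\phi_{k,\ell}^{(j)}$ inside the inner products defining $\pi_k^{(j)}$, one obtains the identity $\pi_k^{(j)}h=(\lambda_k^{(j)})^{m}\,\pi_k^{(j)}f$ for every $k\geq 1$. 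This yields
\[
\mathscr L^{\alpha,\O}_{M_j,g_j}h=\sum_{k\geq 1}(\lambda_k^{(j)})^{-\alpha}\,\pi_k^{(j)}h\big|_{\O}=\sum_{k\geq 1}(\lambda_k^{(j)})^{m-\alpha}\,\pi_k^{(j)}f\big|_{\O}=\zeta_f^{(j)}(m-\alpha,\cdot)\big|_{\O},
\]
and the hypothesis \eqref{so_to_sol_eq} applied to $h$ concludes the odd-indexed case.

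The main obstacle, though not a deep one, is justifying the term-by-term manipulations of the spectral series, namely commuting a fixed power of $\Delta_{g_j}$ with the infinite sum defining $\zeta_f^{(j)}$ and identifying the eigenspace projections of $h$ via eigenfunction orthogonality. All of this is underwritten by the absolute convergence estimates already obtained in the proof of Lemma~\ref{lem_zeta_hol}, combined with the fact that on the closed manifold $M_j$ integration by parts against eigenfunctions produces no boundary term. Once these standard spectral checks are made, the reduction to the source-to-solution hypothesis is immediate.
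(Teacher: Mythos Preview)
Your proposal is correct and follows essentially the same approach as the paper: the even-indexed case uses locality of $\Delta_g$ on $\O$ to identify $\zeta_f^{(j)}(m,\cdot)$ with $(-\Delta_g)^{m}f$ on $\O$, and the odd-indexed case feeds $h=(-\Delta_g)^{m}f$ (which has mean zero by Stokes/integration by parts) into the source-to-solution hypothesis \eqref{so_to_sol_eq}, exactly as the paper does. The only cosmetic difference is the direction of the argument---you start from the spectral expression and identify it with a local object, while the paper starts from the local equality \eqref{k_even} and rewrites it spectrally---but the content is identical.
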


\begin{proof}
The claim \eqref{zeta_b_equal} for $k=2m$ with $m\in \N$ is trivial and follows from the local property \eqref{O_g_equal}. Indeed, we observe from \eqref{O_g_equal} that
\begin{equation}
\label{k_even}
 \Delta_{g_1}^{(m)}f =  \Delta_{g_2}^{(m)}f=\Delta_{g}^{(m)}f \quad \text{on $\O$},\end{equation}
where given any $j=1,2$ and $m\in \N$ the notation $ \Delta_{g_j}^{(m)}f$ is defined analogously to \eqref{Delta_m}.  Using the spectral representation of $\Delta_{g_j}f$ in terms of the eigenvalues $\{\lambda^{(j)}_k\}_{k=0}^{\infty}$ and projection operators $\{\pi^{(j)}_kf\}_{k=0}^{\infty}$ and recalling that $\lambda_0=0$, the equality \eqref{k_even}, when restricted to the subset $\O$, can be rewritten as
$$\sum_{k=1}^{\infty}(\lambda_k^{(1)})^m\,(\pi_k^{(1)}f)|_{\O}=\sum_{k=1}^{\infty}(\lambda_k^{(2)})^m\,(\pi_k^{(2)}f)|_{\O}.
$$
Recalling the definitions \eqref{zeta_def} and \eqref{b_sequence} yields the claim \eqref{zeta_b_equal} for $k=2m$. For the case $k=2m-1$ with $m\in \N$, the claim \eqref{zeta_b_equal} follows from equality of the source-to-solution maps. Indeed, let us first note that given any $m\in \N$ and $f\in \mathcal C^{\infty}_0(\O^{\textrm{int}})$ it follows from Stoke's theorem that
$$ (\Delta_{g_j}^{(m)}f,1)_{L^2(\O)}=\int_{\O}\Delta_{g_j}^{(m)}f\,dV_{g_j}=0,\qquad j=1,2.$$
Thus, by the hypothesis \eqref{so_to_sol_eq} of Theorem~\ref{main_thm} together with \eqref{k_even}, we have
$$ \mathscr L^{\alpha,\O_1}_{M_1,g_1}(\Delta_{g_1}^{(m)}f) = \mathscr L^{\alpha,\O_2}_{M_2,g_2}(\Delta_{g_2}^{(m)}f).$$  
Using equations \eqref{pf}--\eqref{spectral_sol} together with \eqref{Delta_m}, the latter identity, when restricted to the subset $\O$, can be rewritten as 
$$
\sum_{k=1}^{\infty}(\lambda_k^{(1)})^{m-\alpha}\,(\pi_k^{(1)}f)|_{\O}=\sum_{k=1}^{\infty}(\lambda_k^{(2)})^{m-\alpha}\,(\pi_k^{(2)}f)|_{\O}.
$$
The claim \eqref{zeta_b_equal} with $k=2m-1$ now follows from recalling the definitions \eqref{zeta_def} and \eqref{b_sequence}.
\end{proof}

In what follows, we fix a point $p\in \O^{\textrm{int}}$ and (in view of the hypothesis of Theorem~\ref{main_thm}) let $(U_p,\psi_p)$ be a coordinate chart in $\O$ with $p\in U_p\subset \O^{\textrm{int}}$ and such that the components of the metric $(\psi_p)_*g$ belong to the Gevrey class $\mathscr G^N(\psi_p(U_p))$ where $N$ is as in the hypothesis of Theorem~\ref{main_thm} (independent of $p$). Our aim will be to show that given any $f\in \mathcal C^{\infty}_0(U_p)$ and any $x\in \O$, the holomorphic function $\zeta_f^{(j)}(z,x)$ with $j=1,2$, can be constructively determined from the source-to-solution map $\mathscr L_{M_j,g_j}^{\alpha,\O}$. This will partly rely on construction of suitable sources $f$ that exploit the fact that the components of $(\psi_p)_*g $ are in the Gevrey class $\mathscr G^N(\psi_p(U_p))$ as well as a variant of Carlson's theorem in complex analysis, see Appendix~\ref{sec_appendix}.

As a first step, we need to construct a special family of sources to use as test functions. To this end, we begin by defining a non-negative function $\chi_0\in \mathcal C^{\infty}_0(\R)$ with the properties that $\chi_0$ is compactly supported in the interval $(-1,1)$, that $\chi_0(t)=1$ on $|t|\leq \frac{1}{2}$ and additionally that there exists a constant $c>0$ so that
\begin{equation}
\label{chi_0}
\|\frac{d^k}{dt^k}\chi_0\|_{L^{\infty}(\R)}\leq c^{k}(k!)^{N'},\quad\forall\,k\in \N,
\end{equation}
for some fixed real number $N'$ satisfying
\begin{equation}
\label{N'}
N<N'<2,
\end{equation}
where $N$ as in the hypothesis of Theorem~\ref{main_thm}. Since $N'>1$ (recall that $N\geq 1$), the existence of such a function $\chi_0$ is classical and follows from the Denjoy--Carleman classification of quasi-analytic functions, see for example \cite[Theorem 1.3.8, Theorem 1.4.2]{HorI}. Next, given any fixed $q\in U_p$, let us write $\psi_p(q)=(q_1,q_2,\ldots,q_n)$ and given any $r>0$, let us define 
$$\mathbb B_{q,r}:=\{(y^1,y^2,\ldots,y^n)\in \R^n\,:\, |y-\psi_p(q)|<r\},$$
where $|y-\psi_p(q)|=\sqrt{(y^1-q_1)^2+\ldots+(y^n-q_n)^2}$. We define $\delta_0(q) \in (0,\infty]$ via   
\begin{equation} 
\label{delta_0}
 \delta_0(q):=\sup \{r>0\,:\, \mathbb B_{q,a}\subset \psi_p(U_p) \quad \text{for all $a\in (0,r)$}\}.
 \end{equation}
Subsequently, we define for any $\delta \in (0,\delta_0(q))$ the function 
\begin{equation}
\label{F_def} F_{q,\delta}(x)=\begin{cases}\delta^{-n}\chi_0(\delta^{-1}|\psi_p(x)-\psi_p(q)|)\qquad &\text{if $x\in U_p$},\\
0\qquad &\text{otherwise}
\end{cases}
\end{equation}
and note that $F_{q,\delta}$ can be viewed as a smooth function on the entire manifold that is compactly supported in the set
$$\psi_p^{-1}(\mathbb B_{q,\delta_0(q)}) \subset U_p.$$
It is straightforward to see that the functions $F_{q,\delta}$ can act as mollifiers, that is to say, given any $f \in \mathcal C^{\infty}_0(U_p)$, there holds
$$f(x)=c_0\lim_{\delta\to 0}\int_{\psi_p(U_p)}f(\psi_p^{-1}(y))\,F_{\psi_p^{-1}(y),\delta}(x)\,dy,\qquad \forall\, x\in \O,$$
where $c_0>0$ is an explicit constant only depending on $\|\chi_0\|_{L^1(\R)}$ and $n$. In fact, noting the definition of the projection operators given by \eqref{proj_j_def} and the fact that they map into finite dimensional spaces, it is clear that a similar formula can be written for them, that is to say, given $j=1,2$, any $f\in \mathcal C^{\infty}_0(U_p)$, and any $k\in \N$, there holds
\begin{equation}
\label{proj_F_f}
(\pi_k^{(j)}f)(x)=c_0\lim_{\delta\to 0}\int_{\psi_p(U_p)}f(\psi_p^{-1}(y))\,(\pi_k^{(j)}F_{\psi_p^{-1}(y),\delta})(x)\,dy,
\end{equation} 
for all $x\in \O$.
 
\begin{lemma}
	\label{technical_lemma}
	Let $q\in U_p$ and let $\delta_0(q)>0$ be as defined by \eqref{delta_0}. Then given any $0<\delta<\delta_0(q)$ and any $m\in \N$, there holds
	$$ \|\Delta_g^{(m)} F_{q,\delta}\|_{L^\infty(U_p)}\leq C^{m}\,((2m)!)^{N'},$$
	for some constant $C>0$ independent of $m$, where $F_{q,\delta}$ is as defined by \eqref{F_def} and $
	\Delta_g^{(m)}
	\cdot$ is defined analogously to \eqref{Delta_m}. 
\end{lemma}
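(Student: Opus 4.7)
The plan is to prove the inequality by an induction on $m$ that simultaneously controls all coordinate derivatives of $\Delta_g^{(m)} F_{q,\delta}$, exploiting the fact that, in the chart $\psi_p$, both $F_{q,\delta}$ and the coefficients of $\Delta_g$ belong to suitable Gevrey classes. Throughout, I work in the coordinates $\psi_p$ and identify $F_{q,\delta}$ with its coordinate expression $\delta^{-n}\chi_0(\delta^{-1}|y-\psi_p(q)|)$ on $\psi_p(U_p)$.

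The first step is to verify that $F_{q,\delta}$ lies in the Gevrey class $\mathscr G^{N'}$ on $\psi_p(U_p)$, with constants that depend on $\delta$, $q$ and $\chi_0$. On the open ball $|y-\psi_p(q)|<\delta/2$ the function is identically $\delta^{-n}$, hence trivially Gevrey; on the complement, $y\mapsto \delta^{-1}|y-\psi_p(q)|$ is real-analytic, and $\chi_0$ is Gevrey-$N'$ by \eqref{chi_0}, so closure of Gevrey classes under composition with real-analytic maps yields the claim. Next, since $g$ is Gevrey-$N$ in the chart $\psi_p$ by hypothesis and $N\leq N'$, the inverse metric coefficients $g^{jk}$ and $\sqrt{\det g}$ are also Gevrey-$N\subset \mathscr G^{N'}$ on $\psi_p(U_p)$ (Gevrey classes being closed under products, division by non-vanishing elements, and differentiation). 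Writing $\Delta_g$ in coordinates as a second-order linear operator of the form $\sum_{j,k}g^{jk}\partial_j\partial_k+\sum_j h^j\partial_j$, all of its coefficients thus satisfy uniform Gevrey-$N'$ bounds on any fixed compact $K\Subset \psi_p(U_p)$ containing the support of $F_{q,\delta}$.

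The heart of the argument is the following inductive estimate: there exist constants $A,B>0$, independent of $m$, such that for every $m\geq 0$ and every multi-index $\beta$,
\begin{equation*}
\|\partial^\beta \Delta_g^{(m)} F_{q,\delta}\|_{L^\infty(K)}\leq A\,B^{2m+|\beta|}\,\bigl((2m+|\beta|)!\bigr)^{N'}.
\end{equation*}
Setting $\beta=0$ in this estimate yields exactly the claim of the lemma. The base case $m=0$ is the Gevrey-$N'$ bound for $F_{q,\delta}$ established above. For the inductive step, I differentiate $\Delta_g^{(m+1)}F_{q,\delta}=\Delta_g(\Delta_g^{(m)}F_{q,\delta})$ using the Leibniz rule, obtaining a sum of products of derivatives of the Gevrey-$N$ coefficients $g^{jk},h^j$ and derivatives of $\Delta_g^{(m)} F_{q,\delta}$ of total order at most $|\beta|+2(m+1)$; applying the inductive hypothesis to the latter factors and the Gevrey-$N$ bound to the coefficients reduces matters to a classical combinatorial inequality.

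The main obstacle is therefore purely combinatorial, namely verifying that the Leibniz expansion does not destroy the Gevrey-$N'$ growth. The key estimate, valid because $N'\geq 1$, is that
\begin{equation*}
\sum_{\gamma\leq \beta}\binom{\beta}{\gamma}(\gamma!)^{N'}\bigl((|\beta|-|\gamma|+2m+2)!\bigr)^{N'}\leq C^{|\beta|+2m}\bigl((|\beta|+2m+2)!\bigr)^{N'},
\end{equation*}
which follows from $\binom{\beta}{\gamma}^{1-N'}\leq 1$, together with $(|\beta|+2m+2)!\leq (|\beta|+2m)!\cdot(|\beta|+2m+1)(|\beta|+2m+2)$ where the polynomial factor is absorbed into an enlargement of $B$. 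This closes the induction and yields the bound with $C=B^2$ up to the harmless prefactor $A$, completing the proof.
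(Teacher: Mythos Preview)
Your inductive strategy---proving Gevrey-$N'$ bounds on all $\partial^\beta\Delta_g^{(m)}F_{q,\delta}$ simultaneously by induction on $m$---is sound and is a legitimate alternative to the paper's proof, which instead expands $\Delta_g^{(m)}F_{q,\delta}$ directly as a sum of $n^{2m}$ nested products $Q^{j_1\ldots j_m}_{k_1\ldots k_m}$ and bounds each by a trinomial distribution of the $2m$ derivatives among the coefficient factors and $F_{q,\delta}$. However, the induction does not close as you have written it. The displayed ``key estimate'', with the factor $C^{|\beta|+2m}$ on the right, is too weak: when it is inserted into the Leibniz expansion together with the inductive hypothesis $\|\partial^\beta u_m\|\leq A\,B^{2m+|\beta|}\bigl((2m+|\beta|)!\bigr)^{N'}$, one step effectively replaces the base $B$ by $BC$, so the constants drift with $m$ and the induction never stabilises (concretely, after bounding $(A_1/B)^{|\gamma|}\leq 1$ you obtain $\|\partial^\beta u_{m+1}\|\lesssim A\,B^{M}C^{M-2}(M!)^{N'}$ with $M=2(m+1)+|\beta|$, which is not $\leq A\,B^{M}(M!)^{N'}$ for large $M$).

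The repair is to \emph{retain} the factor $(A_1/B)^{|\gamma|}$ coming from the ratio of the coefficients' Gevrey constant $A_1$ to your inductive constant $B$, rather than discarding it. With $M=2m+|\beta|+2$, using $\gamma!\leq|\gamma|!$, $\binom{\beta}{\gamma}\leq\binom{M}{|\gamma|}$ and (your own observation, in the correct form) $\binom{M}{|\gamma|}^{1-N'}\leq 1$, one obtains for $B>A_1$
\[
\sum_{\gamma\leq\beta}\binom{\beta}{\gamma}\Bigl(\frac{A_1}{B}\Bigr)^{|\gamma|}(\gamma!)^{N'}\bigl((M-|\gamma|)!\bigr)^{N'}\;\leq\;(M!)^{N'}\sum_{\gamma\in\{0,1,\ldots\}^n}\Bigl(\frac{A_1}{B}\Bigr)^{|\gamma|}\;=\;\frac{(M!)^{N'}}{(1-A_1/B)^{n}},
\]
a bound uniform in $m$ and $\beta$. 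This closes the induction once you decouple the constants in the hypothesis as $\|\partial^\beta u_m\|\leq A\,R^{m}B^{|\beta|}\bigl((2m+|\beta|)!\bigr)^{N'}$ and choose first $B>A_1$, then $R$ large enough; setting $\beta=0$ then yields the lemma with $C=R$.
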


\begin{proof}
	Note that as $F_{q,\delta}$ is compactly supported in $U_p$, it suffices to prove the estimate in the local coordinate system $(U_p,\psi_p)$. Throughout the remainder of this proof and for the sake of brevity, we will slightly abuse notation and identify points, sets, functions and tensors on $U_p$ with their copies in $\psi_p(U_p)$, thus for example writing $U_p$ in place of $\psi_p(U_p)$, writing  $(y^1,\ldots,y^n)$ for points in $U_p$ and $g$ for the metric in these local coordinates on the set $U_p$ in place of $(\psi_p)_*g$. Finally, we will write $K$ for the support of the function $F_{q,\delta}$ and note that $K\Subset U_p$.
	
	Recalling the Definition~\ref{Gevrey_metric} and in view of the above remark, we may write
	\begin{equation}
	\label{metric_gevrey_exp} \|\partial^{\beta}_yg_{k\ell}\|_{L^{\infty}(K)}\leq A^{|\beta|+1}\,(\beta!)^N,
	\end{equation}
	for all multi-indexes $\beta\in \{0,1,2,\ldots\}^n$ and all $k,\ell=1,2,\ldots,n$, where
	$$ \partial_y^{\beta}:= \p^{\beta_1}_{y^1}\ldots \p^{\beta_n}_{y^n}\quad \text{and}\quad |\beta|=\sum_{\ell=1}^n\beta_\ell.$$
	Let us also observe that since $g$ is a Riemannian metric and since Gevrey classes of functions are closed under multiplication (see for example \cite{Gr}), the analogue of \eqref{metric_gevrey_exp} also holds for the the determinant of the metric as well as the elements of the inverse of the metric. In particular, we can deduce that there is a constant $A_1>0$ depending on $K$ and $g$ such that given any $k,\ell=1,2,\ldots,n$ and any $\beta \in \{0,1,2,\ldots\}^n$, there holds
	\begin{equation}
	\label{metric_gevrey_exp_2} \|\partial^{\beta}_yL^{k\ell}\|_{L^{\infty}(K)}\leq A_1^{|\beta|+1}\,(\beta!)^N,
	\end{equation}
	with $L^{k\ell}=(\det g)^{\frac{1}{2}}g^{k\ell}$ and that
	\begin{equation}
	\label{metric_gevrey_exp_3} \|\partial^{\beta}_yT\|_{L^{\infty}(K)}\leq A_1^{|\beta|+1}\,(\beta!)^N,
	\end{equation}
	where $T=(\det g)^{-\frac{1}{2}}$. It is also straightforward to see from \eqref{F_def} and \eqref{chi_0} that there is a constant $A_2>0$ depending on $F_{q,\delta}$ such that
	\begin{equation}
	\label{F_prop_der} \|\partial^{\beta}_yF_{q,\delta}\|_{L^{\infty}(K)}\leq A_2^{|\beta|+1}\,(\beta!)^{N'},
	\end{equation}
	  for all multi-indexes $\beta\in \{0,1,2,\ldots\}^n$. 
	  
	  We return to the expression $\Delta_g^{(m)}F_{q,\delta}$ and observe from its definition that it is a sum of $n^{2m}$ terms each of which has the form
	  $$Q^{j_1\ldots,j_m}_{k_1\ldots k_m}=T\p_{y^{j_1}}(L^{j_1k_1}\p_{y^{k_1}}(T\p_{y^{j_2}}(L^{j_2k_2}\p_{y^{k_2}}(\ldots \p_{y^{j_m}}(T(L^{j_mk_m}\p_{y^{k_m}}F_{q,\delta}))\ldots)$$
	 for some fixed $j_1,\ldots,j_m=1,\ldots,n$ and $k_1,\ldots,k_m=1,\ldots,n$. It is straightforward to see that
	 \begin{equation}
	 \label{Q_def}
	 \|Q^{j_1\ldots,j_m}_{k_1\ldots k_m}\|_{L^{\infty}(K)}\leq \sum_{i_1+i_2+i_3=2m}\frac{(2m)!}{i_1!i_2!i_3!}H_{i_1}I_{i_2}J_{i_3},\end{equation}
	 where 
	 \begin{equation}
	 \label{I_estimate}
	 H_{i_1}= \sum_{\ell_1+\ldots+\ell_m=i_1}\frac{i_1!}{\ell_1!\ldots\ell_m!}\widetilde{L}^{j_1k_1}_{\ell_1}\ldots \widetilde{L}^{j_mk_m}_{\ell_m},
	 \end{equation}
	 with 
	 $$\widetilde L^{j_sk_s}_{\ell_s}=(\sup_{|\beta|=\ell_s}\|\p^{\beta}_yL^{j_sk_s}\|_{L^{\infty}(K)})\quad s=1,\ldots,m,$$
	 and 
	 \begin{equation}
	 \label{J_estimate} I_{i_2}=\sum_{r_1+\ldots+r_m=i_2}\frac{i_2!}{r_1!\ldots r_m!}\widetilde{T}_{r_1}\ldots \widetilde{T}_{r_m},
	 \end{equation}
	 with 
	 $$\widetilde{T}_{r_s}=
	 (\sup_{|\beta|=r_s}\|\p_y^{\beta}T\|_{L^{\infty}(K)})\quad s=1,\ldots,m,$$
and finally, 
\begin{equation}
\label{K_estimate}
J_{i_3}=\sup_{|\beta|=i_3}\|\p_y^\beta F_{q,\delta}\|_{L^{\infty}(K)}.
\end{equation}
	Applying estimates \eqref{metric_gevrey_exp_2}-\eqref{metric_gevrey_exp_3} and \eqref{F_prop_der} in the latter three equations (and recalling that $N<N'$), it follows that
	\begin{multline*}
	H_{i_1}=\sum_{\ell_1+\ldots+\ell_m=i_1}\frac{i_1!}{\ell_1!\ldots\ell_m!}\widetilde{L}^{j_1k_1}_{\ell_1}\ldots \widetilde{L}^{j_mk_m}_{\ell_m}\leq\\ \sum_{\ell_1+\ldots+\ell_m=i_1}\frac{i_1!}{\ell_1!\ldots\ell_m!}A_1^{m+i_1}(\ell_1!)^{N'}\ldots (\ell_m!)^{N'}\leq A_1^{m+i_1}{m+i_1-1\choose i_1}(i_1!)^{N'}\\
	\leq (2A_1)^{m+i_1} (i_1!)^{N'}.
		\end{multline*}
		Analogously, we have
		$$I_{i_2}\leq (2A_1)^{m+i_2}(i_2!)^{N'}$$
		and that
		$$J_{i_3}\leq A_2^{1+i_3}(i_3!)^{N'}.$$
	Combining the latter three estimates with \eqref{Q_def} yields that 
	\begin{multline*}
	\|Q^{j_1\ldots,j_m}_{k_1\ldots k_m}\|_{L^{\infty}(K)}\leq \sum_{i_1+i_2+i_3=2m}\frac{(2m)!}{(i_1!)^{1-N'}(i_2!)^{1-N'}(i_3!)^{1-N'}}A_1^{2m+i_1+i_2}A_2^{1+i_3}\\
	\leq {2m+2\choose 2} (\max\{A_1,A_2\})^{4m}((2m)!)^{N'}.
	\end{multline*}
	Hence,
	$$\|\Delta_g^{(m)} F_{q,\delta}\|_{L^{\infty}(K)}\leq n^{2m}{2m+2\choose 2}(\max\{A_1,A_2\})^{4m}((2m)!)^{N'},$$
	for all $m\in \N$, which concludes the proof. 	
\end{proof}

\begin{lemma}
	\label{zeta_growth}
	Let $q\in U_p$ and let $\delta_0(q)>0$ be as defined by \eqref{delta_0}. Let $\delta \in (0,\delta_0(q))$ and let $f=F_{q,\delta}\in \mathcal C^{\infty}_0(U_p)$ be as defined by \eqref{F_def}. There holds,
	$$ \zeta_f^{(1)}(z,x)=\zeta_f^{(2)}(z,x) \quad \forall \, (z,x) \in \C\times \O.$$ 
\end{lemma}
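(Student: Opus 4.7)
The plan is to fix $x\in\O$ and prove that the entire function
$$h(z) := \zeta_f^{(1)}(z,x) - \zeta_f^{(2)}(z,x)$$
is identically zero on $\C$. Lemma~\ref{lem_zeta_hol} gives that $h$ is entire, and Lemma~\ref{lem_eq_seq} already supplies that $h$ vanishes on the sequence $\{b_k\}_{k=1}^\infty$ from \eqref{b_sequence}. Everything therefore reduces to a uniqueness principle for entire functions of suitably restricted growth that vanish on a sufficiently dense sequence---precisely the variant of Carlson's theorem due to Pila recorded in Appendix~\ref{sec_appendix}.

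To make that principle applicable, the first task is to quantify the growth of $h$ on $\C$. From the estimate \eqref{S_bound} established in the proof of Lemma~\ref{lem_zeta_hol} one has, for each $j=1,2$ and every $z\in\C$,
$$|\zeta_f^{(j)}(z,x)| \leq C_j\,\|\Delta_g^{(\kappa(z))} F_{q,\delta}\|_{L^\infty(\O)}, \qquad \kappa(z) \leq |z|+n+2,$$
so the problem is shifted to a bound on the iterated Laplacian of $F_{q,\delta}$. Here the special form $f=F_{q,\delta}$, a mollifier built from the Denjoy--Carleman quasi-analytic cut-off $\chi_0$, combined with the Gevrey hypothesis on the metric, is decisive: Lemma~\ref{technical_lemma} yields
$$\|\Delta_g^{(m)} F_{q,\delta}\|_{L^\infty(\O)} \leq C^m ((2m)!)^{N'}, \qquad m\in\N,$$
for some $N'\in(N,2)$. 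Combining these two bounds and applying Stirling's formula, $h$ admits a growth estimate on $\C$ of the form $|h(z)| \leq A_1\exp(A_2 |z|\log(2+|z|))$, with the exponent controlled by the critical quantity $N'<2$.

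With the growth of $h$ in hand, the final step is to apply Pila's theorem from Appendix~\ref{sec_appendix} to the zero set $\{b_k\}$ of $h$. This sequence interleaves the positive integers and their translates by $-\alpha$, so its counting function on $[0,R]$ grows like $2R$; the condition $N<2$ (equivalently, the admissibility of some $N'\in(N,2)$) is tuned exactly so that this density is enough, against the growth bound above, to force $h\equiv 0$. The main obstacle in executing the plan is therefore the quantitative matching between the growth estimate for $h$ and the precise hypotheses of Pila's theorem; the reason the lemma is restricted to sources of the form $F_{q,\delta}$ rather than arbitrary $f\in\mathcal C^\infty_0(\O^{\textrm{int}})$ is precisely that for a general smooth source one has no uniform Gevrey-type control on iterated Laplacians, so the required subcritical growth of $h$ would not be available.
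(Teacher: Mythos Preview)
Your proposal is correct and follows the same strategy as the paper: bound $h=\zeta_f^{(1)}-\zeta_f^{(2)}$ via \eqref{S_bound} and Lemma~\ref{technical_lemma}, then feed the vanishing on $\{b_k\}$ from Lemma~\ref{lem_eq_seq} into Proposition~\ref{prop_carlson}. The one place to be more careful is that Proposition~\ref{prop_carlson} requires three separate hypotheses, and your single global estimate $|h(z)|\leq A_1\exp(A_2|z|\log(2+|z|))$ delivers (H2) and (H3) but not (H1) (boundedness on the imaginary axis); the paper handles this by noting that $\kappa(is)=n+1$ is independent of $s$, so \eqref{S_bound} already gives $|h(is)|\leq C$ directly---this is \eqref{ineq_claim_1}.
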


\begin{proof}
	We fix $f=F_{q,\delta}$ as in the statement of the lemma. Let us define 
	\begin{equation}
	\label{zeta_difference_def} \zeta_f(z,x)=\zeta_f^{(1)}(z,x)-\zeta_f^{(2)}(z,x)\quad \forall\, (z,x)\in \C\times \O.\end{equation}
	Let us define for each $j=1,2$ and each $\mu\in \N$, the partial sums $S_\mu^{(j)}(z,x)$ and $\widetilde{S}_\mu^{(j)}(z,x)$ on $\C\times \O$ to be given by \eqref{S_mu}--\eqref{S_mu_abs} respectively and recall that
	\begin{equation}\label{S_abs_bound}|S_\mu^{(j)}(z,x)| \leq \widetilde{S}_\mu^{(j)}(z,x),\quad \forall \,(z,x) \in \C\times \O.\end{equation}
	Let us also define 
	$$ S_\mu(z,x)=S_\mu^{(1)}(z,x)- S_\mu^{(2)}(z,x),$$
	and record that
	$$ \zeta_f(z,x)=\lim_{\mu\to \infty} S_\mu(z,x).$$
    	Throughout the remainder of this proof, we use the notation $C>0$ for a generic positive constant that  may change from line to line. We claim that
	\begin{equation}
	\label{ineq_claim_1}
	\|\zeta_f(is,\cdot)\|_{L^{\infty}(\O)}\leq C,\quad \forall\, s\in \R,
	\end{equation}
	for some constant $C>0$ independent of $s$. To prove the claim \eqref{ineq_claim_1} we first add the two bounds \eqref{S_bound} for $j=1,2$ with $z=is$. Recalling the bound \eqref{S_abs_bound}, together with the definition $\kappa(is)=n+1$ and that $f=F_{q,\delta}\in \mathcal C^{\infty}_0(U_p)$, it follows that
	$$\|S_{\mu}(is,\cdot)\|_{L^{\infty}(\O)}\leq C\,\|f\|_{W^{2(n+1),\infty}(U_p)}\quad \forall\, s\in \R,$$
	for some constant $C>0$ that is independent of $s$ and $\mu$. By taking the limit as $\mu$ approaches infinity, we conclude the proof of \eqref{ineq_claim_1}. Next, we proceed to prove the bound 
		\begin{equation}
	\label{ineq_claim_2}
    \|\zeta_f(z,\cdot)\|_{L^{\infty}(\O)}\leq C e^{2N'' \,\textrm{Re}(z)\log(\textrm{Re}(z))},\quad \forall\,z \in \{w \in \C\,:\, \textrm{Re}(w)>0\},
	\end{equation}
	for any $N''\in (N',2)$ where $C>0$ is a constant that is independent of $z$. Recall also that $N'$ is as defined via \eqref{N'}. In order to prove this claim, we write $s:=\textrm{Re}(z)\geq 0$ and apply the bound \eqref{S_bound} again together with \eqref{S_abs_bound} to deduce that 
	$$\|S_{\mu}(z,\cdot)\|_{L^{\infty}(\O)}\leq C\,\|\Delta_g^{(\kappa(z))}f\|_{L^{\infty}(U_p)},\quad \forall\,s=\textrm{Re}(z)\geq 0,$$
	for some constant $C>0$ independent of $z$ and $\mu$, where we recall again that $f=F_{q,\delta}\in \mathcal C^{\infty}_0(U_p)$. Noting that
	\begin{equation}
	\label{kappa_pos}
	\kappa(z)=\lceil s\rceil +(n+1),\end{equation}
	for $s\geq 0$ and applying Lemma~\ref{technical_lemma} with $m=\kappa(z)$, we deduce that
	$$\|S_{\mu}(z,\cdot)\|_{L^{\infty}(\O)}\leq C^{\kappa(z)}\,((2\kappa(z))!)^{N'},$$
	for some $C>0$ independent of $z$ and $\mu$. The latter bound reduces as follows (keeping in mind that $C$ stands for a generic positive constant independent of $\mu$ and $z$),
	\begin{multline*}
	\|S_{\mu}(z,\cdot)\|_{L^{\infty}(\O)}\leq C^{\kappa(z)}\,((2\kappa(z))!)^{N'}\leq C^{\kappa(z)}e^{2N'\kappa(z)\log(2\kappa(z))}\\
	\leq C^{\kappa(z)}e^{2N'\kappa(z)\log \kappa(z)}\leq C^{s+1}e^{2N's\log s}\leq C\,e^{2N'' s\log s},
	\end{multline*}
	for all $s>0$, where we recall that $N''$ is any number in the interval $(N',2)$, and $C>0$ is independent of $z$. This concludes the proof of the bound \eqref{ineq_claim_2}. We also state the following bound which follows immediately from \eqref{ineq_claim_2},	
		\begin{equation}
	\label{ineq_claim_3}
	\|\zeta_f(z,\cdot)\|_{L^{\infty}(\O)}\leq Ce^{|z|^{2-c}}, \quad \forall z \in \{w \in \C\,:\, \textrm{Re}(w)\geq 0\},
	\end{equation}
for some constants $C>0$ and $c>0$ independent of $z$.

We are now ready to apply a variant of Carlson's theorem to finish the proof; Given any $x\in \O$ and in view of \eqref{ineq_claim_1}--\eqref{ineq_claim_2} and \eqref{ineq_claim_3} together with Lemma~\ref{lem_eq_seq} it follows that the hypothesis of Proposition~\ref{prop_carlson} is satisfied with $h(z):=\zeta_f(z,x)$ and $\tau=N''$. Therefore, $\zeta_f(z,x)=0$ on the right half plane $\{z\in \C\,:\, \textrm{Re}(z)\geq 0\}$ and subsequently by analytic continuation it must vanish on the entire complex plane. Thus, $\zeta_f(z,x)=0$ for all $(z,x)\in \C\times \O.$
\end{proof}

The latter lemma can be used to prove Proposition~\ref{main_prop}, and the key step in achieving this will be presented in Lemma~\ref{zeta_smooth_equal}, but first we will need the following auxiliary lemma.

\begin{lemma}
	\label{lem_pi_1}
	Let $k\in \N$ be fixed. There is some $q_0 \in U_p$, some $0<\delta<\delta_0(q_0)$ (with $\delta_0(q_0)$ as in \eqref{delta_0}) and some $x_0\in \O$ such that
	$$ (\pi_k^{(1)}F_{q_0,\delta})(x_0) \neq 0.$$
	A similar statement also holds for $\pi_k^{(2)}$.
\end{lemma}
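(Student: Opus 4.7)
The plan is to argue by contradiction: I would assume that $(\pi_k^{(1)} F_{q,\delta})(x)=0$ for \emph{every} $q\in U_p$, every $\delta\in(0,\delta_0(q))$, and every $x\in\O$, and then show that each element of an $L^2(M_1)$-orthonormal basis of the $\lambda_k^{(1)}$-eigenspace must vanish identically on $M_1$. Since the eigenspace is nontrivial by definition, this contradicts $\|\phi_{k,\ell}^{(1)}\|_{L^2(M_1)}=1$ and proves the lemma. An identical argument, with superscripts $(1)$ replaced by $(2)$, handles $\pi_k^{(2)}$.

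To execute this, I would first expand $(\pi_k^{(1)}F_{q,\delta})(x)$ using \eqref{proj_j_def} and analyze the individual inner products $(F_{q,\delta},\phi_{k,\ell}^{(1)})_{L^2(M_1)}$. Writing the integral in the coordinate chart $(U_p,\psi_p)$ and performing the change of variables $z=\delta^{-1}(y-\psi_p(q))$, the $\delta^{-n}$ normalization in \eqref{F_def} cancels the Jacobian and smoothness of $\phi_{k,\ell}^{(1)}$ and of $\sqrt{\det g}$ yields the pointwise mollifier identity
\begin{equation*}
\lim_{\delta\to 0^+}(F_{q,\delta},\phi_{k,\ell}^{(1)})_{L^2(M_1)}=c(q)\,\overline{\phi_{k,\ell}^{(1)}(q)},
\end{equation*}
where $c(q)=\sqrt{\det g(\psi_p(q))}\cdot\int_{\R^n}\chi_0(|z|)\,dz>0$. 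Since the sum over $\ell$ defining $\pi_k^{(1)}$ is \emph{finite} (of length $d_k^{(1)}$), I may pass to the limit termwise in the standing assumption to obtain
\begin{equation*}
c(q)\sum_{\ell=1}^{d_k^{(1)}}\overline{\phi_{k,\ell}^{(1)}(q)}\,\phi_{k,\ell}^{(1)}(x)=0\qquad\text{for every $q\in U_p$ and every $x\in\O$.}
\end{equation*}

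I would then specialize this identity at $x=q\in U_p\subset\O^{\textrm{int}}$ and divide by the strictly positive factor $c(q)$, which yields $\sum_{\ell=1}^{d_k^{(1)}}|\phi_{k,\ell}^{(1)}(q)|^2=0$ for every $q\in U_p$. Consequently, every basis element $\phi_{k,\ell}^{(1)}$ vanishes on the nonempty open set $U_p\subset M_1$.

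To close the argument, I would invoke strong unique continuation for the second-order elliptic operator $-\Delta_{g_1}-\lambda_k^{(1)}$ on the smooth connected manifold $M_1$. Its coefficients are smooth, hence Lipschitz, so the Aronszajn--Cordes theorem applies and any solution vanishing on a nonempty open subset of the connected manifold $M_1$ must vanish identically. Therefore $\phi_{k,\ell}^{(1)}\equiv 0$ on $M_1$ for each $\ell$, which is the desired contradiction. The step that requires most care is the passage to the limit $\delta\to 0^+$ inside the $\ell$-sum; however, because $d_k^{(1)}<\infty$ and each $\phi_{k,\ell}^{(1)}$ is smooth, this is just the classical mollifier identity and presents no genuine obstacle, so the proof is essentially a one-shot application of unique continuation.
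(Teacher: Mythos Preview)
Your proof is correct and follows essentially the same strategy as the paper: assume the projection vanishes for all admissible $(q,\delta,x)$, use the mollifier property of $F_{q,\delta}$ to deduce that the eigenfunctions vanish on $U_p$, and then invoke unique continuation to reach a contradiction. The paper routes this through the pre-established identity \eqref{proj_F_f} to conclude $(\pi_k^{(1)}f)|_{\O}=0$ for all $f\in\mathcal C^\infty_0(U_p)$ before extracting the vanishing of $\phi_{k,1}^{(1)}$ on $U_p$, whereas you compute the mollifier limit directly on the inner products and then specialize $x=q$ to obtain $\sum_\ell|\phi_{k,\ell}^{(1)}(q)|^2=0$; this is a minor streamlining of the same argument rather than a different approach.
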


\begin{proof}
	We give a proof by contradiction and assume that given any $q\in U_p$, any $\delta \in (0,\delta_0(q))$ and any $x\in \O$, there holds
	$$ (\pi_k^{(1)}F_{q,\delta})(x)=0.$$
	But then 
	\begin{equation}
	\label{zero_proj}
	(\pi_k^{(1)}f)|_{\O}=0 \qquad \forall\, f\in \mathcal C^{\infty}_0(U_p),
	\end{equation}
	thanks to \eqref{proj_F_f}. The latter equality implies that the restriction of the eigenspace of $\lambda_k^{(1)}$ to the set $U_p$ is identically zero and thus, in particular, $\phi^{(1)}_{k,1}\equiv 0$ on the set $U_p$. As, 
	$$ -\Delta_{g_1}\phi_{k,1}^{(1)}= \lambda_k^{(1)}\phi_{k,1}^{(1)}\qquad \text{on $M_1$},
	$$
	we conclude from the unique continuation principle for elliptic equations together with connectedness of $M_1$ that $\phi_{k,1}^{(1)}\equiv 0$ everywhere on $M_1$, which is a contradiction to $\|\phi^{(1)}_{k,1}\|_{L^2(M_1)}=1$.  
\end{proof}

\begin{lemma}
	\label{zeta_smooth_equal}
	Given any $k \in \N$, there holds, 
	$$\lambda_k^{(1)}=\lambda_k^{(2)},$$
	and 
	$$(\pi_k^{(1)}f)|_{\O}=(\pi_k^{(2)}f)|_{\O}\qquad \forall\, f\in \mathcal C^{\infty}_0(U_p).$$
\end{lemma}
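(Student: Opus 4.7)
\medskip
\noindent\textbf{Proof proposal.} The plan is to extract eigenvalues and projections, one index at a time, from the identity $\zeta_f^{(1)}(z,x)=\zeta_f^{(2)}(z,x)$ which Lemma~\ref{zeta_growth} hands us for every $f$ of the form $F_{q,\delta}$ with $q\in U_p$ and $\delta\in(0,\delta_0(q))$. The mechanism is a Dirichlet-series uniqueness argument: along real values of $z$, as $z\to-\infty$, the summand with the smallest base strictly dominates every other summand, so it can be isolated by multiplying by the inverse of that base and passing to the limit. Once the equality of all the projections has been established on the dense family $\{F_{q,\delta}\}$, I would upgrade to all of $\mathcal C^\infty_0(U_p)$ via the mollifier formula \eqref{proj_F_f}.

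The induction goes as follows. Fix $f=F_{q,\delta}$. Suppose, towards a contradiction, that $\lambda_1^{(1)}<\lambda_1^{(2)}$ (the reverse case is symmetric). Multiplying the identity by $(\lambda_1^{(1)})^{-z}$, one gets
\[
(\pi_1^{(1)}f)(x)+\sum_{k\geq 2}(\lambda_k^{(1)}/\lambda_1^{(1)})^z(\pi_k^{(1)}f)(x)=\sum_{k\geq 1}(\lambda_k^{(2)}/\lambda_1^{(1)})^z(\pi_k^{(2)}f)(x).
\]
Letting $z\to-\infty$ along $\R$, each summand on the left with $k\geq 2$ tends to zero pointwise and is dominated by $|(\pi_k^{(1)}f)(x)|$; the latter is absolutely summable by \eqref{proj_est_1}--\eqref{eigen_inf_sum}, so dominated convergence collapses the left side to $(\pi_1^{(1)}f)(x)$. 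For the right side, factor out $(\lambda_1^{(2)}/\lambda_1^{(1)})^z\to 0$ (since $\lambda_1^{(2)}/\lambda_1^{(1)}>1$ and $z\to-\infty$); the remaining factor $\sum_{k\geq 1}(\lambda_k^{(2)}/\lambda_1^{(2)})^z(\pi_k^{(2)}f)(x)$ is again uniformly controlled (via the same bound \eqref{S_bound} with $\kappa=n+1$) and hence stays bounded. One concludes $(\pi_1^{(1)}F_{q,\delta})(x)=0$ for every $q$, $\delta$, $x$, contradicting Lemma~\ref{lem_pi_1}. Therefore $\lambda_1^{(1)}=\lambda_1^{(2)}=:\mu_1$, and the very same limit now gives $(\pi_1^{(1)}F_{q,\delta})(x)=(\pi_1^{(2)}F_{q,\delta})(x)$. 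The inductive step is identical after subtracting the matching first $k-1$ terms: the telescoped identity $\sum_{\ell\geq k}(\lambda_\ell^{(1)})^z(\pi_\ell^{(1)}f)(x)=\sum_{\ell\geq k}(\lambda_\ell^{(2)})^z(\pi_\ell^{(2)}f)(x)$ is attacked by multiplying through by $(\lambda_k^{(1)})^{-z}$ and repeating the above.

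To extend the projection identity from $F_{q,\delta}$ to arbitrary $f\in\mathcal C^\infty_0(U_p)$, I would insert $F_{\psi_p^{-1}(y),\delta}$ into the established equality $(\pi_k^{(1)}F_{\psi_p^{-1}(y),\delta})(x)=(\pi_k^{(2)}F_{\psi_p^{-1}(y),\delta})(x)$, multiply by $f(\psi_p^{-1}(y))$, integrate over $y\in\psi_p(U_p)$ and send $\delta\to 0$, directly invoking \eqref{proj_F_f}. The main obstacle I anticipate is the rigorous justification of the dominated-convergence step used to isolate the leading Dirichlet exponent: one has to check that for $\text{Re}(z)\leq 0$ the tail $\sum_{\ell\geq 2}(\lambda_\ell^{(j)}/\mu_1)^z(\pi_\ell^{(j)}f)(x)$ is bounded uniformly in $z$ by a convergent numerical series, and this reduces to the already-established bound \eqref{S_bound} with $\kappa(z)=n+1$ combined with the inequality $|(\lambda_\ell^{(j)}/\mu_1)^z|\leq 1$ valid on $\{\text{Re}(z)\leq 0\}$ for $\ell\geq 2$.
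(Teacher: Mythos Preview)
Your argument is correct and takes a genuinely different route from the paper. The paper introduces the auxiliary resolvent-type function
\[
\mathcal R_f^{(j)}(z,x)=\sum_{k\geq 1}\frac{1}{\lambda_k^{(j)}-z}\,(\pi_k^{(j)}f)(x),
\]
shows that its Taylor coefficients at $z=0$ are the values $\zeta_f^{(j)}(-m,x)$, invokes Lemma~\ref{zeta_growth} to match $\mathcal R_f^{(1)}$ and $\mathcal R_f^{(2)}$ near the origin, and then propagates the equality by analytic continuation to $\C\setminus\mathbb A$; the eigenvalues are read off as the simple poles (Lemma~\ref{lem_pi_1} guarantees the poles are genuine) and the projections as the residues. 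Your approach bypasses the resolvent entirely: you use the identity $\zeta_f^{(1)}\equiv\zeta_f^{(2)}$ directly and extract eigenvalues and projections by a leading-term Dirichlet-series argument along $z\to-\infty$, peeling off one index at a time. The dominated-convergence justification you give is correct, since $\sum_k\|\pi_k^{(j)}f\|_{L^\infty(\O)}<\infty$ by \eqref{proj_est_1}--\eqref{eigen_inf_sum}, and the contradiction via Lemma~\ref{lem_pi_1} is used exactly as in the paper. Both proofs finish with the same mollifier step \eqref{proj_F_f}. Your route is more elementary (no auxiliary meromorphic function, no analytic continuation in a second complex variable); the paper's resolvent argument is the more canonical spectral-theoretic packaging and makes the pole/residue structure explicit, which some readers may find conceptually cleaner.
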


\begin{proof}
	For $j=1,2$, let us also define 
	$$\mathbb A_{j}= \{\lambda_1^{(j)},\lambda_2^{(j)},\lambda_3^{(j)},\ldots\},$$
	and write $\mathbb A= \mathbb A_1\cup \mathbb A_2$. 
	We define, for each $j=1,2$, each $f\in C^{\infty}_0(\O^{\textrm{int}})$  and any $z\in \C\setminus \mathbb A$, the function
	\begin{equation}
	\label{R_lambda_1} \mathcal R_f^{(j)}(z,x)=\sum_{k=1}^{\infty}\frac{1}{\lambda^{(j)}_k-z}(\pi^{(j)}_kf)(x),\qquad \forall\,x\in \O.
	\end{equation}
	To rigorously justify the above the definition we recall that in view of the estimate \eqref{proj_est_1} with $m=n+1$ together with the bound \eqref{eigen_inf_sum} we have
	\begin{equation}
	\label{sum_pi_bound}
	\sum_{k=1}^{\infty}\|\pi^{(j)}_kf\|_{L^{\infty}(\O)}\leq C_j\|f\|_{W^{2(n+1),\infty}(\O)},
	\end{equation}
	for some constant $C_j>0$. Thus \eqref{R_lambda_1} is well defined and in fact given any $x\in \O$ and $f\in \mathcal C^{\infty}_0(\O^{\textrm{int}})$, the function $\mathcal R_f^{(j)}(\cdot,x)$ is holomorphic in $\C\setminus \mathbb A$ and has (possibly) simple poles on the set $\mathbb A$. The reason that we state the word possibly here is that if $(\pi_k^{(j)}f)(x_0)=0$ for some $f$ and some $x_0 \in \O$, then $\mathcal R_{f}^{(j)}(z,x_0)$ will be holomorphic at $z=\lambda_k$. Observe that given any $j=1,2$, any $f\in \mathcal C^{\infty}_0(\O^{\textrm{int}})$ and any $z \in \C$ with
	\begin{equation}
	\label{lambda_range} |z|<\min\{\lambda_1^{(1)},\lambda^{(2)}_1\},
	\end{equation}
	there holds,
\begin{equation}
\label{R_1_equal}
	\begin{aligned}
\mathcal R_f^{(j)}(z,x)&=\sum_{k=1}^{\infty}\frac{1}{\lambda_k^{(j)}-z}(\pi^{(j)}_kf)(x)=\sum_{k=1}^{\infty}\frac{(\pi^{(j)}_kf)(x)}{\lambda_k^{(j)}(1-\frac{z}{\lambda_k^{(j)}})}\\
&=\sum_{k=1}^{\infty}\sum_{m=1}^{\infty}\frac{z^{m-1}}{(\lambda_k^{(j)})^m}(\pi^{(j)}_kf)(x)=\sum_{m=1}^{\infty}z^{m-1}\sum_{k=1}^{\infty}\frac{(\pi^{(j)}_kf)(x)}{(\lambda_k^{(j)})^m}\\
&=\sum_{m=1}^{\infty}z^{m-1}\,\zeta_f^{(j)}(-m,x),\qquad \forall\, x \in \O.
\end{aligned}
\end{equation}
	We remark that in view of Fubini's theorem the interchanging of the sums in the second step above are justified as the series given by the double infinite sum above is absolutely convergent, thanks to \eqref{sum_pi_bound} and \eqref{lambda_range}. 
	
	Next, let us define the space of functions
	$$\mathcal X:=\{h\in \mathcal C^{\infty}_0(U_p)\,:\,\text{$h=F_{q,\delta}$ for some $q\in U_p$ and some $\delta \in (0,\delta_0(q))$}\}.$$
	We deduce, in view of the equality \eqref{R_1_equal} together with applying Lemma~\ref{zeta_growth} that 
	\begin{equation}
	\label{R_12_prelim}
	\mathcal R_f^{(1)}(z,x)=\mathcal R_f^{(2)}(z,x),
	\end{equation}
	for all $f\in \mathcal X$, $x\in \O$ and $z\in \C$ satisfying \eqref{lambda_range}. Finally, as the functions $\mathcal R_f^{(j)}(z,x)$ are holomorphic away from the set $\mathbb A$ which is a countable set of isolated points, we conclude via the analytic continuation principle that given any $f\in \mathcal X$ there holds
	\begin{equation}
	\label{R_12_equal}
		\mathcal R_f^{(1)}(z,x)=\mathcal R_f^{(2)}(z,x),\qquad \forall\, (z,x) \in (\C\setminus \mathbb A)\times \O. 
	\end{equation}
	
	In order to prove $\lambda_k^{(1)}=\lambda_k^{(2)}$ for all $k\in \N$, we will first prove the inclusion $\mathbb A_1\subset \mathbb A_2$. To this end, let us fix $k\in \N$, and note in view of Lemma~\ref{lem_pi_1} that there exists some $q_0 \in U_p$, some $0<\delta_1<\delta_0(q_0)$ with $\delta_0(q_0)$ as in \eqref{delta_0} and some $x_0\in \O$, such that 
	 \begin{equation}
	 \label{pi_1_1}
	(\pi_k^{(1)}F_{q_0,\delta_1})(x_0)\neq 0.\end{equation}
	Next, we apply \eqref{R_12_equal} with the choice $f=F_{q_0,\delta_1}\in \mathcal X$ and $x=x_0$. Recalling \eqref{pi_1_1}, we note that the function $\mathcal R_f^{(1)}(z,x_0)$ has a simple pole at $z=\lambda_k^{(1)}$. Consequently, in view of the equality \eqref{R_12_equal} together with the definition \eqref{R_lambda_1}, it follows that $\mathcal R_f^{(2)}(z,x_0)$ must also have a simple pole at $z=\lambda_k^{(1)}$, and therefore $\lambda_k^{(1)}\in \mathbb A_2$. As $k\in \N$ was arbitrary, we conclude that
	$\mathbb A_1\subset \mathbb A_2$. The opposite inclusion, namely $\mathbb A_2\subset \mathbb A_1$, follows by symmetry. 
	
	We have shown that $\lambda_k^{(1)}=\lambda_k^{(2)}$ for all $k\in \N$. In order to prove the second claim in the statement of the lemma, we note that given any $f\in \mathcal X$, we can use \eqref{R_12_equal} to write
    $$ (\pi_k^{(1)}f)(x)= \lim_{z\to \lambda_k^{(1)}}(\lambda_k^{(1)}-z)\mathcal R_f^{(1)}(z,x)= \lim_{z\to \lambda_k^{(2)}}(\lambda_k^{(2)}-z)\mathcal R_f^{(2)}(z,x)=(\pi_k^{(2)}f)(x),$$
    for all $x\in \O$ and all $k\in \N$. The second claim in the lemma now follows trivially from combining the latter identity with \eqref{proj_F_f}.     
		\end{proof}

\begin{lemma}
\label{proj_0_equal}
There holds:
$$d_k^{(1)}=d_k^{(2)}\quad \forall\, k\in \N,$$
and
$$ (\pi_0^{(1)}f)|_{\O}=(\pi_0^{(2)}f)|_{\O}\quad \forall\,f\in \mathcal C^{\infty}_0(U_p).$$
\end{lemma}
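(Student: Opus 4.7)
The plan is to combine Lemma~\ref{zeta_smooth_equal} with the unique continuation principle for elliptic equations and the smooth completeness of the eigenbasis. The multiplicities will come from analyzing the image of the restricted projection $f\mapsto(\pi_k^{(j)}f)|_\O$; the $\pi_0$-equality will follow by subtracting the two full spectral expansions of $f$ restricted to $\O$ and invoking Lemma~\ref{zeta_smooth_equal} termwise for $k\geq 1$.

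For the multiplicity claim, fix $k\in\N$ and $j\in\{1,2\}$, and consider the linear map $T_k^{(j)}\colon \mathcal C^{\infty}_0(U_p)\to \mathcal C^{\infty}(\O)$ given by $T_k^{(j)}f=(\pi_k^{(j)}f)|_\O$. By construction, its image is contained in the span $V_k^{(j)}:=\spn\{\phi_{k,\ell}^{(j)}|_\O : 1\leq \ell\leq d_k^{(j)}\}$. The unique continuation principle for elliptic equations, together with the connectedness of $M_j$, ensures that any nontrivial linear combination of the $\phi_{k,\ell}^{(j)}$ cannot vanish on the open set $\O$; hence the restrictions are linearly independent and $\dim V_k^{(j)}=d_k^{(j)}$. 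A dual application of the same principle shows that $T_k^{(j)}$ is onto $V_k^{(j)}$: otherwise some nonzero $(a_1,\ldots,a_{d_k^{(j)}})\in\C^{d_k^{(j)}}$ would satisfy $\sum_\ell a_\ell(f,\phi_{k,\ell}^{(j)})_{L^2(M_j)}=0$ for every $f\in\mathcal C^{\infty}_0(U_p)$, whence $\sum_\ell \bar a_\ell\phi_{k,\ell}^{(j)}$ would vanish on $U_p$ and therefore on $M_j$, contradicting orthonormality. Thus $\dim\mathrm{Image}(T_k^{(j)})=d_k^{(j)}$, and since Lemma~\ref{zeta_smooth_equal} gives $\mathrm{Image}(T_k^{(1)})=\mathrm{Image}(T_k^{(2)})$, we conclude $d_k^{(1)}=d_k^{(2)}$.

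For the $\pi_0$ claim, fix $f\in\mathcal C^{\infty}_0(U_p)$. Since $f$ is smooth on each $M_j$, its spectral expansion $f=\sum_{k=0}^\infty \pi_k^{(j)}f$ converges in every Sobolev norm $H^s(M_j)$, and therefore uniformly on $M_j$, for both $j=1$ and $j=2$. Restricting this identity to $\O$ and subtracting the two expansions yields
$$(\pi_0^{(1)}f - \pi_0^{(2)}f)|_\O = -\sum_{k=1}^\infty\bigl((\pi_k^{(1)}f)|_\O - (\pi_k^{(2)}f)|_\O\bigr),$$
whose right-hand side vanishes termwise by Lemma~\ref{zeta_smooth_equal}. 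The main subtle point throughout is the use of unique continuation to extract the full multiplicity $d_k^{(j)}$ from a purely local (to $\O$) piece of information --- a priori the restriction to $\O$ could lose rank; the uniform convergence of the spectral series needed in the final step is routine given the eigenfunction bounds \eqref{uni_eigen} and Weyl's law \eqref{weyl} already invoked in the proof of Lemma~\ref{lem_zeta_hol}.
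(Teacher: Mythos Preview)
Your proof is correct. The multiplicity argument is essentially the paper's: both of you show that
\[
d_k^{(j)}=\dim\{(\pi_k^{(j)}f)|_{\O}:f\in\mathcal C^\infty_0(U_p)\}
\]
via unique continuation and then invoke Lemma~\ref{zeta_smooth_equal}.

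For the $\pi_0$ claim, however, you take a genuinely different route. The paper argues that $\pi_0^{(j)}f=\textrm{Vol}(M_j,g_j)^{-1}\int_\O f\,dV_g$, so it reduces the statement to $\textrm{Vol}(M_1,g_1)=\textrm{Vol}(M_2,g_2)$, which it then extracts from the equality of the counting functions $\mathcal N_1(\lambda)=\mathcal N_2(\lambda)$ via Weyl's asymptotic law. Your argument instead subtracts the two full spectral expansions of $f$ restricted to $\O$ and cancels the $k\geq 1$ terms using Lemma~\ref{zeta_smooth_equal}. This is more direct and avoids invoking Weyl's law altogether; the only analytic input is uniform convergence of the eigenfunction expansion of a smooth function, which is elementary. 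The paper's route has the side benefit of explicitly establishing equality of volumes, but that is not needed for the lemma as stated.
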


\begin{proof}
	Recall that
	$$
	d_k^{(j)} = \dim\{\pi_k^{(j)}f\,:\,f\in \mathcal C^{\infty}_0(U_p)\}\qquad \forall\, k\in \N.$$
	In order to prove the first claim, it suffices (thanks to Lemma~\ref{zeta_smooth_equal}) to show that
	\begin{equation}
	\label{d_equality}
	d_k^{(j)} = \dim\{(\pi_k^{(j)}f)|_{U_p}\,:\,f\in \mathcal C^{\infty}_0(U_p)\}\qquad \forall\, k\in \N.\end{equation}
	Indeed, if the latter identity was not true for some $j=1,2$ and some $k\in \N$, then there would exist a non-trivial function $\phi\in \mathcal C^{\infty}(M_j)$ with $\phi|_{U_p} \equiv 0$ that satisfies 
	$$-\Delta_{g_j}\phi = \lambda_k^{(j)}\phi,\qquad \text{on $M_j$}.$$
	But this is impossible, in view of the unique continuation principle for elliptic equations and the fact that $M_j$ is connected.
	
	To prove the second claim, we recall that $\lambda_0^{(1)}=\lambda_0^{(2)}=0$, and that the eigenspace corresponding to the zero eigenvalue is the set of constant functions on $M_j$. Therefore, $d_0^{(1)}=d_0^{(2)}=1$ and for each $j=1,2$, 
	$$\phi_{0,1}^{(j)}= \frac{1}{\sqrt{\textrm{Vol}(M_j,g_j)}},\qquad \text{for $j=1,2$,}$$
	where the notation $\textrm{Vol}(M_j,g_j)$ stands for the volume of the manifold $M_j$ with respect to the metric $g_j$. Thus, in order to prove the second claim in the lemma, it suffices to show that the two manifolds have the same volume.
	
	To this end, let us observe from the equalities  
	$\lambda_k^{(1)}=\lambda_k^{(2)}$ and  $d_k^{(1)}=d_k^{(2)}$ for all $k=0,1,\ldots$, that there holds
	$$\mathcal N_1(\lambda)=\mathcal N_2(\lambda)\qquad \forall \lambda \in \R,$$
	where we recall that the notation $\mathcal N_j(\lambda)$, $j=1,2,$ stands for the number of eigenvalues of $\Delta_{g_j}$ on $(M_j,g_j)$ that are less than $\lambda$ (counting with multiplicity). The equality of the volumes now follows the latter equality together with Weyl's asymptotic formula for $\mathcal N_j(\lambda)$, see for example \cite{Ivrii}.
\end{proof}

We are now ready to prove the main proposition. 
\begin{proof}[Proof of Proposition~\ref{main_prop}]
Equality of the eigenvalues and their multiplicities follow immediately from Lemma~\ref{zeta_smooth_equal} and Lemma~\ref{proj_0_equal}. The equality of the projection operators $\pi_kf$ with $f\in \mathcal C^{\infty}_0(\O^{\textrm{int}})$ follows from the same lemmas by using a partition of unity. Indeed, let us assume that $f\in \mathcal C^{\infty}_0(\O^{\textrm{int}})$, and write $K:=\supp f$. Given any point $p\in K$, and in view of the hypothesis of Proposition~\ref{main_prop}, let $(U_p,\psi_p)$ be a coordinate chart in which the components of the metric belong to the Gevrey class $\mathscr G^N$ with $N$ as in the hypothesis of Theorem~\ref{main_thm}. Then, since $K$ is compact there is a finite collection of points $I$ such that $\{U_p\}_{p\in I}$ gives an open cover for the set $K$. Let $\{\eta_p\}_{p\in I}$ denote a partition of unity subordinate to the open cover $\{U_p\}_{p\in I}$. Then, by definition of the projection operators \eqref{proj_j_def}, together with Lemma~\ref{zeta_smooth_equal} and Lemma~\ref{proj_0_equal}, we can write for any $k=0,1,2,\ldots,$ and any $x\in \O$,
 \begin{equation}
 \label{partition_unity_eq}
 \begin{aligned}
 (\pi_k^{(1)}f)(x)&=\sum_{\ell=1}^{d_k^{(1)}}(f,\phi_{k,\ell}^{(1)})_{L^2(\O)}\phi_{k,l}^{(j)}(x)=\sum_{\ell=1}^{d_k^{(1)}}\sum_{p\in I}(f\eta_p,\phi_{k,\ell}^{(1)})_{L^2(U_p)}\phi_{k,l}^{(j)}(x)\\
 &=\sum_{p\in I}(\pi_k^{(1)}\eta_pf)(x)=\sum_{p\in I}(\pi_k^{(2)}\eta_pf)(x)=  (\pi_k^{(2)}f)(x).
 \end{aligned}
\end{equation}
\end{proof}

\section{Proof of the main theorem via a standard reduction to the Gel'fand inverse spectral problem}

Proposition~\ref{main_prop} shows that the source-to-solution map for a closed connected Riemannian manifold $(M,g)$ subject to an observable region $\O$ uniquely determines the eigenvalues, their multiplicities and the restriction of the projection operators of each eigenspace to the observable set. Thus, to complete the proof of Theorem~\ref{main_thm} we must determine whether the latter spectral data determines the topology, differential structure and the Riemannian metric of the inaccessible region $M\setminus \O$. We remark that this problem is rather similar to the well known Gel'fand inverse spectral problem first posed by Gel'fand in \cite{Gelfand} which concerns finding the topology, differential structure and Riemannian metric of a compact manifold with boundary from the spectral data associated to the Dirichlet Laplacian on the boundary. This problem was first solved by Belishev and Kurylev in \cite{BK92}, see also the precursors \cite{Bel1,Bel2}. We refer the reader to \cite{KrKaLa} that solves an analogous problem stated on closed Riemannian manifolds, to \cite{AKKLT} for the solution to a variant of the problem and to the monograph \cite{KKL} for review. 

To complete our proof of Theorem~\ref{main_thm} we show that the spectral data obtained in Proposition~\ref{main_prop} uniquely determines the spectral data for the Neumann Laplacian for the inaccessible submanifold $M\setminus \O$ and then apply the main result of \cite{BK92} to conclude the proof. Let us remark that alternatively, one could prove Theorem~\ref{main_thm} by reducing our inverse problem to the one studied in \cite{KrKaLa} and then applying \cite[Theorem 1]{KrKaLa} to conclude the proof. 


Before proving Theorem~\ref{main_thm} we need to fix a few notations. We write $\Sigma:=\p\O$ to stand for the boundary of the observation submanifold $\O$ and note that since $M_j$, $j=1,2$ are connected, there holds
$$\Sigma=\p \O= \p (M_j\setminus\O).$$
For each $\lambda>0$, 
we consider the equation 
\begin{equation}\label{boundary_eq}
\left\{ \begin{array}{ll}-\Delta_{g_j} u_j+\lambda\,u_j=0, & \mbox{on}\ M_j\setminus \O,\\  
u_j  =h & \mbox{on}\ \Sigma:=\p\O,\\
\end{array} \right.
\end{equation}
Given any $h \in H^{\frac{3}{2}}(\Sigma)$, it is classical that the above problem admits a unique solution $u_j\in H^2(M_j\setminus\O)$. We define the Dirichlet-to-Neumann map associated to \eqref{boundary_eq} via the mapping
\begin{equation}\label{DN_map}
\Lambda^\lambda_{M_j,g_j}(h)=\p_\nu u_j|_{\Sigma} 
\end{equation}
where $\nu$ is the exterior unit normal vector field on $\Sigma$ (exterior with respect to $\O$) and the right hand side is to be understood as an element in $H^{\frac{1}{2}}(\Sigma)$.

Next, and for each $\lambda>0$, we consider the equation
\begin{equation}\label{source_eq}
 -\Delta_{g_j}v_j+\lambda\,v_j=f,\quad \mbox{on}\ M_j,  
\end{equation}
Given any $f \in L^2(\O)$ (this means that $f\in L^2(M_j)$ and the essential support of $f$ lies in $\O^{\textrm{int}}$), the above problem admits a unique solution $v_j\in H^2(M_j)$. We define the Source-to-Dirichlet map and the Source-to-Neumann map associated to \eqref{source_eq} via the mapping
\begin{equation}\label{SD_map}
\mathscr D^\lambda_{M_j,g_j}(f)=v_j|_{\Sigma}, 
\end{equation}
and
\begin{equation}\label{SN_map}
\mathscr N^\lambda_{M_j,g_j}(f)=\p_\nu v_j|_{\Sigma}
\end{equation}
respectively, where $v_j$ is the unique soution to \eqref{source_eq} with source $f$. It is straightforward to see that given $j=1,2$ and any $\lambda>0$, there holds
\begin{equation}
\label{ND_relation}  
\mathscr N^{\lambda}_{M_j,g_j}(f) =  \Lambda_{M_j,g_j}^\lambda\left( \mathscr D_{M_j,g_j}^\lambda(f)\right)\qquad \forall\, f \in L^2(\O).
\end{equation}
We have the following lemma.
\begin{lemma}
\label{lem_density}
	For $j=1,2$, any $\lambda>0$, and any $h \in \mathcal C^{\infty}(\Sigma)$, there exists $f_j\in L^2(\O)$ such that
	$$\mathscr D^{\lambda}_{M_j,g_j}f_j=h.$$
\end{lemma}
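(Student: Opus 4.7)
The plan is to exhibit $f_j$ as the image of a suitable smooth function on $M_j$ under the operator $-\Delta_{g_j}+\lambda$. Observe first that if $v_j\in H^2(M_j)$ satisfies $v_j|_\Sigma=h$ and $f_j:=(-\Delta_{g_j}+\lambda)v_j$ vanishes on $M_j\setminus\O$, then, because $\lambda>0$ is not an eigenvalue of $-\Delta_{g_j}$ on the closed manifold $M_j$ (the Laplacian eigenvalues being non-negative), $v_j$ is automatically the unique solution to \eqref{source_eq} with source $f_j$, and therefore $\mathscr D^\lambda_{M_j,g_j}f_j=v_j|_\Sigma=h$. It thus suffices to construct a smooth $v_j$ on $M_j$ agreeing with the exterior Dirichlet solution on $M_j\setminus\O$.

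First, I would solve the exterior Dirichlet problem
\[
(-\Delta_{g_j}+\lambda)u_j=0\ \text{on}\ M_j\setminus\O,\qquad u_j|_\Sigma=h.
\]
Since the Dirichlet eigenvalues of $-\Delta_{g_j}$ on the compact manifold with smooth boundary $M_j\setminus\O$ are strictly positive and $\lambda>0$, the operator $-\Delta_{g_j}+\lambda$ with zero Dirichlet data is invertible; elliptic regularity up to $\Sigma$ then produces a unique solution $u_j\in\mathcal C^\infty(\overline{M_j\setminus\O})$. Second, I would extend $u_j$ to a smooth function on all of $M_j$: in a collar neighborhood $W$ of $\Sigma$ in $M_j$ with Fermi normal coordinates, Seeley's extension theorem yields a smooth $\tilde u_j$ on the open set $(M_j\setminus\O)\cup W$ whose restriction to $M_j\setminus\O$ equals $u_j$. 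Multiplying by a smooth cutoff $\chi$ supported in $(M_j\setminus\O)\cup W$ and identically equal to $1$ on $M_j\setminus\O$ produces $v_j:=\chi\,\tilde u_j\in\mathcal C^\infty(M_j)$ with $v_j|_{M_j\setminus\O}=u_j$, so in particular $v_j|_\Sigma=h$.

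Finally, setting $f_j:=(-\Delta_{g_j}+\lambda)v_j$ yields a smooth function on $M_j$ that vanishes on $M_j\setminus\O$ (since $v_j=u_j$ solves the homogeneous equation there); hence $f_j\in L^2(\O)$. By uniqueness for $(-\Delta_{g_j}+\lambda)w=f_j$ on the closed manifold $M_j$, the unique solution $w$ equals $v_j$, and therefore $\mathscr D^\lambda_{M_j,g_j}f_j=v_j|_\Sigma=h$, as required. The only mildly technical ingredient is the smooth extension of $u_j$ across $\Sigma$, which is classical; the remainder is bookkeeping.
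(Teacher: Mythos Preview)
Your proof is correct and takes essentially the same approach as the paper: solve the exterior Dirichlet problem on $M_j\setminus\O$, extend the solution smoothly across $\Sigma$ to all of $M_j$, and set $f_j=(-\Delta_{g_j}+\lambda)$ of the extension. The paper's argument is terser (it simply takes a smooth extension without invoking Seeley or a cutoff, and omits the explicit uniqueness verification), but the substance is identical.
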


\begin{proof}
For $j=1,2$, let $w_j\in \mathcal C^{\infty}(\overline{M_j\setminus \O})$ be the unique solution to equation \eqref{boundary_eq} subject to the boundary data $h$. Let $\widetilde{w}_j$ be a smooth extension of $w_j$ to $M_j$ and define
$$f_j=(-\Delta_{g_j}+\lambda)\widetilde{w}_j.$$
It is clear that $f_j\in L^2(\O)$ as it is smooth everywhere and identically zero on $M_j\setminus\O$. Hence, by definition, we have $\mathscr D^{\lambda}_{M_j,g_j}f_j=h.$
\end{proof}

\begin{proof}[Proof of Theorem~\ref{main_thm}]
We recall the notation \eqref{O_g_equal} and define, for each $j=1,2$, each $f\in \mathcal C^{\infty}_0(\O^{\textrm{int}})$ and each $\lambda>0$, the function
\begin{equation}
\label{R_lambda_2}
\widetilde{\mathcal R}_f^{(j)}(\lambda,x)=\sum_{k=0}^{\infty}\frac{1}{\lambda^{(j)}_k+\lambda}(\pi^{(j)}_kf)(x),\qquad \forall\,x\in \O.
\end{equation}
Observe that the above definition is justified thanks to \eqref{sum_pi_bound}. 
Observe also that in view of Proposition~\ref{main_prop} together with the definition \eqref{R_lambda_2}, we have 
\begin{equation}
\label{R_equal_smooth}
 \widetilde{\mathcal R}_f^{(1)}(\lambda,x)= \widetilde{\mathcal R}_f^{(2)}(\lambda,x),\qquad \forall\, \lambda>0, 
 \end{equation}
for all $f\in \mathcal C^{\infty}_0(\O^{\textrm{int}})$ and all $x \in \O$.  It is straightforward to see from the definition \eqref{R_lambda_1} that given any $\lambda>0$ and any $f\in \mathcal C^{\infty}_0(\O^{\textrm{int}})$, there holds
 $$(\mathscr D^{\lambda}_{M_j,g_j}f)(x)= \widetilde{\mathcal R}_f^{(j)}(\lambda,x), \qquad \forall\, x\in \Sigma,$$
 and that
 $$ (\mathscr N^{\lambda}_{M_j,g_j}f)(x)= \p_\nu\widetilde{\mathcal R}_f^{(j)}(\lambda,x),\qquad \forall\, x\in \Sigma.$$
 Combining the latter two identities with \eqref{R_equal_smooth}, we deduce that
 \begin{equation}
 \label{D_map_equal}
 \mathscr D_{M_1,g_1}^{\lambda}f=  \mathscr D_{M_2,g_2}^{\lambda}f \qquad \text{on $\Sigma$},
 \end{equation}
 and
  \begin{equation}
 \label{N_map_equal}
 \mathscr N_{M_1,g_1}^{\lambda}f=  \mathscr N_{M_2,g_2}^{\lambda}f \qquad \text{on $\Sigma$},
 \end{equation}
for all $f\in \mathcal C^{\infty}_0(\O^{\textrm{int}})$ and all $\lambda>0$. By continuity, we conclude that the equations \eqref{D_map_equal} and \eqref{N_map_equal} must also hold for all $f\in L^2(\O)$. Finally, recalling the equation \eqref{ND_relation} together with Lemma~\ref{lem_density}, we conclude that
$$ \Lambda_{M_1,g_1}^{\lambda}h=  \Lambda_{M_2,g_2}^{\lambda}h,$$
for any $h\in C^{\infty}(\Sigma)$, and thus by continuity for all $h\in H^{\frac{3}{2}}(\Sigma)$. 

We have shown that the Dirichlet-to-Neumann maps for equation \eqref{boundary_eq} with $j=1,2$ are identical for all $\lambda>0$.  Applying \cite[Theorem 1]{KKLM}, it follows that the Dirichlet spectral data for the two manifolds $(M_1\setminus \O_1,g_1|_{M_1\setminus\O_1})$ and $(M_2\setminus \O_2,g_2|_{M_2\setminus\O_2})$ are identical and as such we have reduced our inverse problem to the standard Gel'fand spectral inverse problem studied in \cite{BK92}. Applying the main result in \cite{BK92}, we conclude that there is a diffeomorphism 
$$\Phi_1: M_1\setminus \O_1 \to M_2\setminus \O_2,$$
that is equal to identity on $\Sigma$ and such that
$$(\Phi_1^*g_2)|_{M_1\setminus\O_1}=g_1|_{M_1\setminus\O_1}.$$
Recalling \eqref{O_eq}, we can glue the diffeomorphism $\Phi_1$ along $\Sigma$ with the identity diffeomorphism in \eqref{O_eq} (see for example \cite[Theorem 2.8]{M}) to construct the desired diffeomorphism $\Phi$.
\end{proof} 

\appendix

\section{A variant of Carlson's theorem with faster than exponential growth along the real axis}
\label{sec_appendix}

Carlson's theorem in complex analysis gives sharp conditions for a holomorphic function in the right half plane of exponential type, so that the function may be uniquely determined by its values on the set of positive integers, see \cite{Carlson} for the original version of this theorem and to \cite{Fu,Rubel} for some extensions that allow for example more general sequences of numbers along the real axis. This theorem can be seen as an application of the Phragm\'{e}n--Lindel\"{o}f theorem in complex analysis.

For our purposes, we need a variant of Carlson's theorem that is due to Pila, see \cite{Pila}. Pila used the precise asymptotics of the Gamma function to derive a variant of Carlson's theorem that allows slightly faster growth for the holomorphic function along the real-axis at the expense of some slower growth on the imaginary axis. This slightly faster growth along the real axis that is captured by the logarithm function is what essentially allowed us to prove the main theorem. Pila's variant is stated for holomorphic functions that are known on the set of positive integers but as we need to consider more general sequences of real numbers (see \eqref{b_sequence}), we present here a slightly altered statement that is tailored to our setup and include the short proof for the sake of completeness. 
\begin{proposition}
	\label{prop_carlson}
	Let $\alpha\in (0,1)$ and let $(b_k)_{k=1}^{\infty}$ be the strictly monotone sequence defined by \eqref{b_sequence}. Write $z=x+iy$ and suppose that $h(z)$ is holomorphic in $x\geq 0$ and that there is constants $c_1,c_2>0$ such that
	\begin{enumerate}
	\item[(H1)]{$|h(iy)| \leq c_1$ for all $y\in \R$,} 
	\item[(H2)] {$\exists\,\tau \in (0,2)$ such that $|h(x)| \leq c_1 e^{2\tau x\log x}$ for all $x>0$,} 
	\item[(H3)]{$|h(z)|\leq c_1e^{|z|^{2-c_2}}$ for all $z\in \{x+iy\in \C\,:\,x\geq 0\}$.}
	\end{enumerate}
	 Suppose that $h(b_k)=0$ for all $k\in \N$. Then $h$ vanishes identically on the set $\{x+iy\in \C\,:\,x\geq 0\}$.  
\end{proposition}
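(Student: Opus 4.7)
The plan is to adapt the classical Carlson strategy. I would define the auxiliary function
\[
F(z) = h(z)\,\Gamma(1-z)\,\Gamma(1-\alpha-z).
\]
The factor $\Gamma(1-z)$ has simple poles exactly at $z = m$, $m \in \N$, which are the even-indexed terms $b_{2m}$ of the given sequence, while $\Gamma(1-\alpha-z)$ has simple poles exactly at $z = m-\alpha$, the odd-indexed terms $b_{2m-1}$. Since $h(b_k)=0$ for every $k$, all of these singularities are removable and $F$ extends to a holomorphic function on the closed right half-plane $\{\textrm{Re}\,z\geq 0\}$. The goal is then to force $F\equiv 0$; since $\Gamma$ has no zeros, this will imply $h\equiv 0$.

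The second step is to estimate $F$ by Stirling's asymptotic $\log\Gamma(w)=(w-\tfrac12)\log w - w + O(1)$, valid in any sector $|\arg w|\leq \pi-\delta$. On the imaginary axis, $|\Gamma(1\pm iy)|\sim \sqrt{2\pi|y|}\,e^{-\pi|y|/2}$ combined with (H1) yields $|F(iy)|\to 0$ as $|y|\to\infty$, so $F$ is bounded there. In a sector $0<\arg z <\pi/2$ the product $|\Gamma(1-z)\Gamma(1-\alpha-z)|$ decays like $\exp(-2|z|\log|z|\cos(\arg z) + O(|z|))$, and combined with the (H3) bound $|h(z)|\leq c_1 e^{|z|^{2-c_2}}$ this gives $F$ order strictly less than $2$ in the interior. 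Along the positive real axis, the reflection formula together with Stirling yields $|\Gamma(1-x)\Gamma(1-\alpha-x)|\leq \exp(-2x\log x + O(x))$ at points bounded away from the $b_k$ (the trigonometric singularities of the reflection formula are absorbed by the vanishing of $h$ at $b_k$), so (H2) produces the pointwise bound $|F(x)|\leq \exp(2(\tau-1)x\log x + O(x))$, which is strictly sub-order-$2$ because $\tau<2$. The density matching---two zeros of $h$ per unit length on the real axis versus the $2x\log x$ of decay contributed by the Gamma product---is exactly what makes the critical threshold $\tau<2$ work.

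With these estimates in hand, I would apply the Phragm\'{e}n--Lindel\"{o}f principle in each quadrant $\{0<\pm\arg z<\pi/2\}$. A sector of opening $\pi/2$ admits growth of any order strictly less than $2$, which is precisely what we have established. The boundary estimates then yield a uniform bound for $|F|$ in each quadrant, hence on the entire closed right half-plane. A second application of Phragm\'{e}n--Lindel\"{o}f, now combining the established boundedness with the decay $F(iy)\to 0$ on the imaginary axis, forces $F\equiv 0$ on the right half-plane; since $\Gamma$ is nowhere-vanishing, $h\equiv 0$ follows.

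The main obstacle I anticipate is that $F$ need not be bounded on the positive real axis---when $\tau\geq 1$ it may grow super-exponentially---so the standard bounded-boundary form of Phragm\'{e}n--Lindel\"{o}f cannot be invoked verbatim. The strategy, following Pila, is to first work on a sub-sector $\{0<\arg z<\pi/2-\epsilon\}$ whose two boundary rays both avoid the positive real axis (and on which Stirling provides actual decay of $F$), deduce a sub-order-$2$ bound for $F$ in the interior of this sub-sector, and finally extend the bound to include the ray $\arg z = 0$ via a quantitative interpolation between the boundary growth rates. Carrying this interpolation out rigorously, in the spirit of \cite{Pila}, is the most delicate step of the proof.
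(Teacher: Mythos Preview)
Your approach differs from the paper's, and the difference is precisely where the argument breaks when $\tau\geq 1$.

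The paper (following Pila) does \emph{not} multiply by Gamma factors whose poles match the zeros $b_k$. Instead it sets
\[
f(z)=\frac{h(z)}{\Gamma(z+1)^{2\tau'}},\qquad \tau'\in(\tau,2).
\]
This Gamma power has no poles in $\textrm{Re}(z)\geq 0$; its role is purely to tame the real-axis growth: by Stirling $|\Gamma(x+1)^{2\tau'}|\sim e^{2\tau' x\log x}$, so (H2) makes $f$ \emph{bounded} on the positive real axis, while (H1) and Stirling give $\limsup_{|y|\to\infty}|y|^{-1}\log|f(iy)|\leq\pi\tau'<2\pi$. Phragm\'en--Lindel\"of applied to $k_\pm(z)=f(z)e^{\pm i\pi\tau'' z}$ (with $\tau''\in(\tau',2)$) in each quarter-plane then shows $f$ is of exponential type throughout $\textrm{Re}(z)\geq 0$. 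At that point $f$ lies in the classical Carlson regime---exponential type, imaginary-axis indicator strictly below $2\pi$, vanishing on a sequence $\{b_k\}$ of density $2$---and the paper simply invokes the Fuchs/Rubel extensions of Carlson's theorem to conclude $f\equiv 0$.

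Your choice $F(z)=h(z)\,\Gamma(1-z)\,\Gamma(1-\alpha-z)$ is the traditional Carlson construction, and it is exactly what Pila's trick was designed to replace. Your own real-axis estimate $|F(x)|\leq\exp\bigl(2(\tau-1)x\log x+O(x)\bigr)$ already exposes the problem: for $\tau\geq 1$ this is super-exponential, so the positive real axis cannot serve as a bounded boundary ray for Phragm\'en--Lindel\"of. The proposed sub-sector fix does not help, because on any interior ray $\arg z=\theta\in(0,\pi/2)$ the only available control on $h$ is (H3), while your Gamma factors contribute merely $e^{-2|z|\cos\theta\log|z|+O(|z|)}$ by Stirling; the net is still $|F(z)|\lesssim e^{|z|^{2-c_2}}$, so neither boundary ray of any such sub-sector carries a bound better than the interior one. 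There is no ``quantitative interpolation'' to lean on here, since you have genuine order-$(2-c_2)$ growth on every ray except $\theta=0,\pi/2$. (As a secondary point, ``$F$ bounded and $F(iy)\to 0$ forces $F\equiv 0$ by Phragm\'en--Lindel\"of'' is not valid as stated---consider $F(z)=(1+z)^{-1}$---though with the exponential decay you actually have it can be rescued by a further Blaschke/Carlson argument.) The missing ingredient is the division by $\Gamma(z+1)^{2\tau'}$, which brings the problem down to exponential type \emph{before} the zero set $\{b_k\}$ is exploited; once that is done, one may as well cite a known Carlson variant rather than carry out the endgame by hand.
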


\begin{proof}
	We follow the argument of \cite{Pila} with small modifications. Let 
	$$\tau' \in (\tau,2).$$
	We write $\Gamma(z)$ for the Gamma function and note 
	$\Gamma(z+1)$ is regular and never zero in $x\geq 0$. Thus, $\log\Gamma(z+1)$ is well defined and as such given any $\beta \in \C$, the function $\Gamma(z+1)^\beta$ can also be defined in a regular manner there. Note that we are taking the principal value of the logarithm function. In view of Sterling's formula we have on $x>0$,
		\begin{equation}
	\label{sterling}
	\log\Gamma(z+1)=(z+\frac{1}{2})\log z-z+\frac{1}{2}\log(2\pi)+O(|z|^{-\frac{1}{2}}),	
	\end{equation}
	as $z\to \infty$, uniformly in $x\geq 0$. We define the function
	$$f(z)= \frac{h(z)}{\Gamma(z+1)^{2\tau'}} \quad \text{on $x\geq 0$.}$$
	The function $f(z)$ is regular in $x\geq 0$ and in view of condition (H2) together with Sterling's approximation \eqref{sterling}, it is also uniformly bounded on the positive $x$-axis. On the other hand, by (H1) and \eqref{sterling}, there holds
	\begin{equation}
	\label{y_axis_bound}
	\limsup_{|y|\to \infty} \frac{\log|f(iy)|}{|y|}\leq \pi\,\tau'<2\pi.\end{equation}
	Furthermore, by (H3) and \eqref{sterling}, there holds for any $0<c_3<c_2$,
	\begin{equation}
	\label{awayfrom_axis}
	|f(z)|=O(e^{|z|^{2-c_3}}),
	\end{equation}
	as $|z|\to \infty$, uniformly in the argument of $z$. Now let $\tau'' \in (\tau',2)$ and subsequently consider the functions
	$$k_{+}(z)=f(z) e^{i\pi\tau'' z}\qquad \text{on the first quadrant},$$
	and
		$$k_{-}(z)=f(z) e^{-i\pi\tau''\,z}\qquad \text{on the fourth quadrant},$$
	in the complex plane. The function $k_+$ (resp. $k_-$) is uniformly bounded on the positive $x$-axis and also on the positive (resp. negative) $y$-axis, and there holds
	$$|k_{\pm}(z)|= O(e^{|z|^{2-c_4}}),$$
for some $0<c_4<c_3$, as $|z|\to \infty$, uniformly in the argument of $z$. By Phragm\'{e}n--Lindel\"{o}f (see for example \cite[Chapter 5, Section 6]{Ti}) the function $k_{+}(z)$ (resp. $k_-(z)$) will be uniformly bounded in the first quadrant (resp. fourth quadrant). We deduce that the function $f(z)$ will be of exponential type throughout $x\geq 0$, that is to say $|f(z)|\leq C\,e^{\pi\tau''|z|}$ throughout $x\geq 0$. Note that since $\alpha \in (0,1)$, there holds
\begin{equation}
\label{b_gap}
b_{k+1}-b_{k}\geq \min\{\alpha,1-\alpha\}>0.
\end{equation}
Recalling that $f$ satisfies \eqref{y_axis_bound} on the $y$-axis and additionally it vanishes on the sequence of points $\{b_k\}_{k\in \N}$ on the $x$-axis, we are in the setup of a classical variant of Carlson's theorem with non-integer points of vanishing, see for example  \cite[Theorem 1]{Fu} (with $\psi(r)=r^4$, $k=\pi\tau''$ and $c=\frac{1}{2}\min\{\alpha,1-\alpha\}>0$ in that theorem) and \cite[Theorem 5]{Rubel} (with $\mathfrak G=2\pi$, $\gamma=\min\{1-\alpha,\alpha\}$ and $L(\Lambda)=2$ in that theorem). We conclude that $f$ and consequently $h$ vanish identically on the set $\{x+iy\in \C\,:\,x\geq 0\}$ (we remark that in \cite{Rubel} the results are stated for entire holomorphic functions but all the arguments work when restricting to the right half plane). 
\end{proof}

\end{document}